\pgfplotsset{compat=1.16}
\renewcommand*\env@matrix[1][\arraystretch]{%
  \edef\arraystretch{#1}%
  \hskip -\arraycolsep
  \let\@ifnextchar\new@ifnextchar
  \array{*\c@MaxMatrixCols c}}
\newcommand{\WS}{\mathcal W}
\newcommand{\meq}{M_\text{eqn}}
\newcommand{\mdeg}{M_\text{deg}}
\newcommand{\Tm}{\mathcal T}
\newcommand{\half}{\frac{1}{2}}
\newtheorem{prop}{Proposition}
\newcommand{\reals}{\mathbb R}
\newcommand\figref{Figure~\ref}
\newcommand\tabref{Table~\ref}
\renewcommand{\vec}[1]{\ensuremath{\bm{\mathrm{#1}}}}
\newcommand{\mat}[1]{\ensuremath{\bm{\mathrm{#1}}}}
\newcommand{\diff}{\mathop{}\!\mathrm{d}}
\journalname{}
\begin{document}
\title{Semi-implicit Hybrid Discrete $\left(\text{H}^T_N\right)$ Approximation of Thermal Radiative Transfer
}

\titlerunning{Semi-implicit Hybrid Discrete Approximation of Thermal Radiative Transfer}

\author{Ryan G. McClarren \and James A. Rossmanith \and Minwoo Shin\footnote{Corresponding Author}}

\authorrunning{R.G. McClarren, J.A. Rossmanith, and M. Shin} 

\institute{   Ryan G. McClarren\at
              University of Notre Dame \\
              Department of Aerospace and Mechanical Engineering \\
              Notre Dame, IN 46556, USA\\
              \email{\href{mailto:rmcclarr@nd.edu}{rmcclarr@nd.edu}}
              \and 
              James A. Rossmanith\at
              Iowa State University \\
              Department of Mathematics \\
              Ames, IA 50011-2104, USA\\
              \email{\href{mailto:rossmani@iastate.edu}{rossmani@iastate.edu}}
              \and 
              Minwoo Shin\at
              University of Notre Dame \\
              Department of Aerospace and Mechanical Engineering \\
              Notre Dame, IN 46556, USA \\
              \email{\href{mailto:mshin@nd.edu}{mshin@nd.edu}}
}

\date{Received: date / Accepted: date}

\maketitle

\begin{abstract}
The thermal radiative transfer (TRT) equations form an integro-differential system that describes the propagation and collisional interactions of photons. Computing accurate and efficient numerical solutions TRT are challenging for several reasons, the first of which is that TRT is defined on a high-dimensional phase space that includes the independent variables of time, space, and velocity. In order to reduce the dimensionality of the phase space, classical approaches such as the P$_N$ (spherical harmonics) or the S$_N$ (discrete ordinates) ansatz are often used in the literature. In this work, we introduce a novel approach: the hybrid discrete (H$^T_N$) approximation to the radiative thermal transfer equations. This approach acquires desirable properties of both P$_N$ and S$_N$, and indeed reduces to each of these approximations in various limits: H$^1_N$ $\equiv$ P$_N$ and H$^T_0$ $\equiv$ S$_T$. We prove that H$^T_N$ results in a system of hyperbolic partial differential equations for all $T\ge 1$ and $N\ge 0$. Another challenge in solving the TRT system is the inherent stiffness due to the large timescale separation between propagation and collisions, especially in the diffusive  (i.e., highly collisional) regime. This stiffness challenge can be partially overcome via implicit time integration, although fully implicit methods may become computationally expensive due  to the strong nonlinearity and system size. On the other hand, explicit time-stepping schemes that are not also asymptotic-preserving in the highly collisional limit require resolving the mean-free path between collisions, making such schemes prohibitively expensive. In this work we develop a numerical method that is based on a nodal discontinuous Galerkin discretization in space, coupled with a semi-implicit discretization in time. In particular, we make use of a second order explicit Runge-Kutta scheme for the streaming term and an implicit Euler scheme for the material coupling term. Furthermore, in order to solve the material energy equation implicitly after each predictor and corrector step, we linearize the temperature term using a Taylor expansion; this avoids the need for an iterative procedure, and therefore improves efficiency. In order to reduce unphysical oscillation,  we apply a slope limiter after each time step. Finally, we conduct several numerical experiments to verify the accuracy, efficiency, and robustness of the H$^T_N$ ansatz and the numerical discretizations.
\keywords{thermal radiative transfer \and discontinuous Galerkin \and discrete hybrid approximation \and semi-implicit time integration \and asymptotic-preserving}
\subclass{85A25 \and 65M60}
\end{abstract}


\section{Introduction}
The thermal radiative transfer (TRT) equations describe the interaction of matter and thermal radiation, and are used in a wide range of applications including remote sensing, glass manufacturing, and combustion \cite{siegel}. The TRT equations model the propagation, absorption, and emission
of photons, as well as the coupling of these dynamics to background media
via a material-energy equation. Accurate and efficient numerical solutions of the TRT equations are challenging for several reasons:
\begin{enumerate}

\item The TRT equations form an integro-differential system that is posed on a high-dimensional 
phase space:  the time dimension, up to three spatial dimensions, two velocity direction dimensions, and a frequency dimension;
\item Optically thick media introduces stiffness due to the large timescale separation between propagation and collisions;
\item The material-energy coupling equation introduces strong non-linearity.
\end{enumerate}

\subsection{\bf Discretization of the phase space}
Two broad classes of solution approaches have been used to tackle the TRT equation (1) stochastic methods and (2) deterministic methods; each approach offers their own advantages and disadvantages. The main workhorse of the stochastic approach is the Monte Carlo (MC) method, which is considered one of the most reliable methods in the radiation community \cite{fleck,mcclarren2009modified,wollaber2016four}.
Monte Carlo methods produce statistical noise due to under-sampling of the phase space; and therefore, to compensate for this under-sampling, many MC histories are required, which in turns makes the MC calculation expensive. As a result, high performance computing approaches are needed. 
{\color{black}In the realm of deterministic approaches for the angular dependence, the most popular methods are the discrete ordinates (S$_N$) method \cite{carlson,koch,lathrop3,lathrop2,lathrop1,pomraning1} and the spherical harmonics (P$_N$) method with various closures \cite{lewis,mcclarren1,olson1,olson,pomraning}.}
The advantages of these approaches are that, relative to Monte Carlo, they are less computationally expensive. On the other hand they each suffer from  their own specific disadvantages. 

The S$_N$ approximation scheme creates a system of equations along a discrete set of angular directions that are taken from a specific quadrature set \cite{carlson1,jarrell,lau,thurgood}, and angular integrals are calculated via the given quadrature set. The S$_N$ method has been the subject of intense research, and many large-scale efficient solution techniques have been developed, including approaches that have been shown to scale on leadership-class computers \cite{adams2020provably}.   The S$_N$ method's main drawback is the phenomenon known as ray effects \cite{lathrop3,lathrop2}, which arises due to the fact that particles move only along certain directions in the quadrature set. These effects are conspicuously observed in optically thin materials with localized sources or sharp material discontinuities. 
 
On the other hand, the P$_N$ method approximates the solution with a spherical harmonics expansion that by construction is rotationally invariant and converges in the $L^2$ sense \cite{hauck}. The main disadvantage of the P$_N$ approach is that it is plagued by Gibbs phenomena (i.e., unphysical oscillations) due to the fact that it is a truncated spectral method and can thereby produce the negative particle concentrations \cite{hauck,mcclarren2008solutions}, which is physically undesirable. Additionally, in the P$_N$ method, when $N$ is small, the wave speeds are reduced because the system eigenvalues can be far from unity. {\color{black}Closures have been developed to deal some of the shortcomings of the P$_N$ method, such as the M$_N$ method \cite{dubroca1999etude,hauck2011high} and other techniques \cite{laiu,zheng2016moment,hauck2010positive}.}

\subsection{\bf Stiffness}
Another challenging aspect in accurately and efficiently solving the thermal radiative transfer (TRT) equations is that the system is inherently {\it stiff}. This stiffness is especially pervasive
in the diffusive  regime (i.e., highly collisional), where there is a large timescale separation between propagation and collisions. The reduction of the kinetic TRT system to a lower dimensional system of partial differential equations (PDEs) via an ansatz such as discrete ordinate (S$_N$) or spherical harmonics (P$_N$) does not alleviate this problem. 

From the perspective of numerical methods,
the stiffness challenge can be partially overcome via implicit time integration, although fully implicit methods may become computationally expensive due  to the strong nonlinearity and system size. On the other hand, explicit time-stepping schemes that are not also asymptotic-preserving in the highly collisional limit require resolving the mean-free path between collisions, making such schemes prohibitively expensive.

An efficient alternative to both fully implicit and fully explicit schemes are semi-implicit schemes, where, at least roughly-speaking, the  transport is handled explicitly, while the material coupling term (i.e., collisions) are handled implicitly. For example, Klar et al. \cite{klar,klar1} developed and analyzed an operator splitting approach that was shown to be asymptotically-preserving while at the same time relatively efficient in all regimes. A different semi-implicit approach for TRT was subsequently developed by McClarren et al. \cite{mcclarren1}, in which a two-stage second-order Runge-Kutta (RK) scheme was used for the streaming term and the backward Euler scheme was used for the material coupling term. This scheme was also shown to be asymptotic-preserving in the diffusive limit.

\subsection{\bf Strong nonlinearity in the material coupling term}
In the absence of material coupling, the thermal radiative transfer equations and its various reductions (e.g., P$_N$ or S$_N$) represent a system of linear constant-coefficient partial differential equations. The presence of the material coupling term introduces strong nonlinearity. For explicit time-stepping methods this nonlinearity does not pose a direct challenge, although, as mentioned above, explicit methods will then suffer from small time-steps. For fully implicit time-stepping methods the nonlinearity results in large nonlinear algebraic equations that must be inverted in time-step, resulting in significant computational expense. For semi-implicit time-stepping approaches, a nonlinear material coupling term only introduces a local nonlinear algebraic equation (i.e., local on each element), although even this can add computational expense. Fortunately, McClarren, Evans, and Lowrie \cite{mcclarren1} showed that  the local nonlinear problems presented by a semi-implicit approach can be linearized via a simple Taylor series argument. It was shown that this linearization reduces the computational complexity and does not adversely affect overall accuracy of the method.

\subsection{\bf Scope of this work}
 
In order to address some of the shortcomings of classical deterministic methods, we introduce in this work the hybrid discrete (H$^T_N$) approximation, which hybridizes aspects of the P$_N$ and S$_N$ methods. The H$^T_N$ approximation is equivalent to spherical harmonics (P$_N$) approximation when the number of the discretized velocity space ($T$) is one and only depends on the order of spherical harmonics basis functions ($N$). Also, it is equivalent to the discrete ordinate (S$_N$) method for a certain quadrature set. By hybridizing P$_N$ and S$_N$, the H$^T_N$ approximation is able to acquire beneficial properties of both of these classical approaches. We prove that H$^T_N$ results in a system of hyperbolic partial differential equations for all $T\ge 1$ and $N\ge 0$.

Once we have shown how to reduce the TRT equations into their H$^T_N$ approximate form, we will discretize the resulting PDE system via the same semi-implicit time integration scheme introduced by McClarren et al. \cite{mcclarren1}. In particular, this approach uses a second order Runge-Kutta explicit time discretization schemes for the streaming term and a backward Euler scheme for the material energy term; this allows us to resolve the stiffness of the TRT systems and to preserve the asymptotic diffusion limit.
Furthermore, in order to solve the material energy equation implicitly after each predictor and corrector step, we linearize the temperature term using a Taylor expansion; this avoids the need for an iterative procedure, and therefore improves efficiency. In order to reduce unphysical oscillation,  we apply a slope limiter after each time step. 

In the present work we will consider only grey, i.e., frequency-averaged, radiation transport equations in slab geometry. Non-grey and multi-dimensional radiative transfer will be studied in future work.  For the grey TRT system in slab geometry we consider several standard test cases in order to validate the accuracy, efficiency, and robustness of scheme, as well as to highlight benefits over P$_N$ and S$_N$ solutions.
 
 The remainder of this paper is organized as follows. In  \cref{sec:hd1d} the novel hybrid discrete (H$^T_N$) approximations in slab geometry are derived ands discussed. In \cref{sec:dg} we develop a semi-implicit nodal discontinuous Galerkin finite element scheme for the resulting H$^T_N$ systems. In  \cref{sec:numerical_result} we provide numerical results of H$^T_N$ approximations on various benchmark problems to show its robustness and asymptotic preserving property. Conclusions are presented in  \cref{sec:conclusion}.


\section{Hybrid discrete approximation} 
\label{sec:hd1d}
In this section, we formulate H$^T_N$ approximations to the radiative transfer in one-dimension. 
We begin with the thermal radiative transfer in slab geometry, and then derive the 
hybrid discrete approximation. We prove that the H$^T_N$ system is always hyperbolic.
 
\subsection{\bf Frequency-independent grey thermal transfer}\label{sec:hd1d-1}
Consider the 1D scattering-free thermal radiative transfer equation (e.g., see \cite{mcclarren1}):
\begin{equation}\label{eq:radiative1d_a}
\frac{1}{c}\frac{\uppartial \widehat{I}}{\uppartial t} + \mu \frac{\uppartial\widehat{I}}{\uppartial z}  + \widehat{\sigma} \widehat{I}= \widehat{\sigma} \widehat{B}+\frac{\widehat{s}}{2}, 
\end{equation}
where $\widehat{I}(t,z,\mu,\nu): \reals_{\ge 0} \times \reals \times [-1,1] \times \reals \mapsto \reals$ is the specific intensity, $t\in\reals_{\ge 0}$ is time, $z\in\reals$ is the spatial variable, $\mu\in[-1,1]$ is the angular variable, i.e., $\mu=\cos{\varphi}$ with polar angle $\varphi
\in [0,\pi]$, $\nu\in\reals$ is the frequency of the photon,  $\widehat{s}(t,z,\nu):\reals_{\ge 0} \times \reals \times \reals \mapsto \reals$ is an external source term, $\widehat\sigma (z,\nu): \reals \times \reals \mapsto \reals_{\ge 0}$ is the opacity, $\widehat{B}(\nu,\theta): \reals \times \reals_{\ge 0} \mapsto \reals_{\ge 0}$ 
is the Planck function that satisfies:
\begin{equation}
\label{eqn:b_theta}
    B(\theta):=\int_{\nu}\widehat{B}(\nu,\theta) \diff\nu=\frac{ac\theta^4}{2},
\end{equation}
where  $\theta(t,z): \reals_{\ge 0} \times \reals  \mapsto \reals_{\ge 0}$ is the material temperature in keV, $c=3\times10^{10}$cm s$^{-1}$ is the speed of light, and $a=1.372\times10^{14}$ ergs cm$^{-3}$ keV$^{-4}$ is the radiation constant. The notation $\widehat{\cdot}$ is used to signify quantities that depend explicitly on the frequency $\nu$.
Equation \eqref{eq:radiative1d_a} couples to the material-energy equation:
\begin{equation}\label{eq:radiative1d_b}
\frac{\uppartial e}{\uppartial t}=\int_{\mu}\int_{\nu}{\widehat{\sigma}(z,\nu) \left(\widehat{I}(t,z,\mu,\nu)-\widehat{B}(\nu,\theta)\right)}\diff\nu\diff\mu,
\end{equation}
where $e(\rho,\theta):\reals_{\ge 0} \times \reals_{\ge 0} \mapsto \reals_{\ge 0}$
is the material energy per volume and $\rho(t,z):\reals_{\ge 0} \times \reals  \mapsto
\reals_{\ge 0}$ is the material density.

For the remainder of this paper we consider only the case where the opacity is frequency independent: $\widehat{\sigma}(z,\nu) \rightarrow \sigma(z)$. In this approximation, we define the
following frequency-integrated quantities (which now removes the $\widehat{\cdot}$ notation):
\begin{equation}
I(t,z,\mu) := \int_{\nu} \widehat{I}(t,z,\mu,\nu) \, \diff\nu, \quad
s(t,z) := \int_{\nu} \widehat{s}(t,z,\nu) \, \diff\nu,
\end{equation}
and equation \eqref{eqn:b_theta}. We also define
the angular moment of the radiation intensity $\widehat{I}$ as follows:
\begin{align}
E(t,z):=\frac{1}{c}\int_{\mu}\int_{\nu}\widehat{I}(t,z,\mu,\nu)\diff\nu\diff\mu
 = \frac{1}{c} \int_{\mu} I(t,z,\mu) \, \diff\mu.
\label{eq:moment1}
\end{align}
Furthermore, we will assume throughout this work that the material density, $\rho$,
 is constant, which results in the following:
\begin{equation}\label{eq:energy_temperature}
    \frac{\uppartial e}{\uppartial t}=\frac{\uppartial{e}}{\uppartial{\theta}}\frac{\uppartial{\theta}}{\uppartial{t}}+\frac{\uppartial{e}}{\uppartial{\rho}}\frac{\uppartial{\rho}}{\uppartial{t}}=C_{v}\frac{\uppartial{\theta}}{\uppartial{t}},
\end{equation}
where $C_{v} = 0.3 \times 10^{16} \text{erg}/\text{cm}^3/\text{keV}$ is the heat capacity at constant volume.

Under the assumption of a frequency independent opacity and a constant material density,
we can integrate \eqref{eq:radiative1d_a} over the frequency $\nu$ and arrive at the 
following grey transport equation and material-energy equation:
\begin{gather}\label{eq:frequencyfree_grey1d}
    \frac{1}{c}\frac{\uppartial I}{\uppartial t} + \mu \frac{\uppartial I}{\uppartial z}+\sigma I=\frac{1}{2}\sigma{ac{\theta}^4}+\frac{s}{2}, \\
\label{eq:material1d}
    C_v \frac{\uppartial \theta}{\uppartial t}=\sigma \left( \int_{-1}^{1} I(t,z,\mu) \, \diff\mu -a c {\theta}^4 \right).
\end{gather}
respectively. These equations are defined on $t>0$, $z \in \left(z_L,z_R\right)$, and $\mu \in [-1,1]$, and must be equipped with initial conditions at $t=0$ and appropriate boundary conditions at $z=z_L$ and $z=z_R$.

\subsection{\bf Formulation of the 1D H$^T_N$ approximation}\label{sec:hd1d-2}
Equations \eqref{eq:frequencyfree_grey1d}--\eqref{eq:material1d} represent an integro-differential equation for the intensity, $I(t,z,\mu)$, and the material temperature, $\theta(t,z)$. {\color{black}One class of techniques for approximating such systems is the dimension reduction via ansatz in $\mu$.} Standard techniques for reducing the dimensionality of these equations to a system defined only over $(t,z)$ include the P$_N$  (i.e., spherical harmonics) (e.g., see \cite{brunner1}) and S$_N$ (i.e., discrete ordinates) \cite{carlson} methods. Spherical harmonics (P$_N$) suffer from producing negative (and therefore unphysical) intensities, while discrete ordinates (S$_N$) suffer from ray effects. In this work we consider an alternative approach that was first developed in Shin \cite{shin}, namely the H$^T_N$ (i.e., hybrid discrete) approach. The main advantage of H$^T_N$ is that it allows us to  work with an approximation that combines desirable aspects of both P$_N$ and S$_N$. In particular H$^T_N$ reduces to each of these approximations in various limits: H$^1_N$ $\equiv$ P$_N$ and H$^T_0$ $\equiv$ S$_T$.

\begin{figure}[htbp]
\centering
\begin{tabular}{cc}
(a)\hspace{-2mm}\begin{tikzpicture}
  \node [anchor=east,magenta] at (0.075,2.175) {$z_L$};
  \node [anchor=west,magenta] at (3.95,2.175) {$z_R$};
  \draw[magenta] (0,2) circle (2pt);
  \draw[magenta] (4,2) circle (2pt);
  \draw[thick] (0, 0) rectangle (4, 4);
  \draw[->,black,thick](0, 2) -- (4.5, 2) node[anchor=north]{$z$};
  \draw[black,thick](-1, 2) -- (0, 2);
  \draw[->,black,thick](0, -0.5) -- (0, 4.5) node[anchor=east]{$\mu$};  
  \draw[-,blue,thick] (0, 0.5) -- (4, 0.5);
  \draw[-,blue,thick] (0, 1.0) -- (4, 1.0);
  \draw[-,blue,thick] (0, 1.5) -- (4, 1.5);
  \draw[-,blue,thick] (0, 2.5) -- (4, 2.5);
  \draw[-,blue,thick] (0, 3.0) -- (4, 3.0);
  \draw[-,blue,thick] (0, 3.5) -- (4, 3.5);
  \node at (0,0.25) [circle,fill,inner sep=1.5pt]{};
  \node at (0,0.75) [circle,fill,inner sep=1.5pt]{};
  \node at (0,1.25) [circle,fill,inner sep=1.5pt]{};
  \node at (0,2.75) [circle,fill,inner sep=1.5pt]{};  
  \node at (0,3.25) [circle,fill,inner sep=1.5pt]{};
  \node at (0,3.75) [circle,fill,inner sep=1.5pt]{};
  \draw[magenta] (0,0) circle (2pt);
  \draw[magenta] (0,4) circle (2pt);
  \node [anchor=east,magenta] at (0,0.00) {$-1$};
  \node [anchor=east] at (0,0.25) {$\mu_1$};
  \node [anchor=east] at (0,0.75) {$\mu_2$};
  \node [anchor=east] at (0,1.25) {$\mu_3$};
  \node [anchor=east] at (0,2.75) {$\mu_{T-2}$};
  \node [anchor=east] at (0,3.25) {$\mu_{T-1}$};
  \node [anchor=east] at (0,3.75) {$\mu_{T}$};
  \node [anchor=east,magenta] at (0,4.00) {$1$};
  \node at (-0.45,2.125+0.25) [circle,fill,inner sep=0.5pt]{};  
  \node at (-0.45,2+0.25) [circle,fill,inner sep=0.5pt]{};    
  \node at (-0.45,1.875+0.25) [circle,fill,inner sep=0.5pt]{};
  \node at (-0.45,2.125-0.25) [circle,fill,inner sep=0.5pt]{};  
  \node at (-0.45,2-0.25) [circle,fill,inner sep=0.5pt]{};    
  \node at (-0.45,1.875-0.25) [circle,fill,inner sep=0.5pt]{};
\end{tikzpicture} & 
(b)\hspace{-2mm}\begin{tikzpicture}
  \node [anchor=east,magenta] at (0.075,2.175) {$z_L$};
  \node [anchor=west,magenta] at (3.95,2.175) {$z_R$};
  \draw[magenta] (0,2) circle (2pt);
  \draw[magenta] (4,2) circle (2pt);
  \draw[thick] (0, 0) rectangle (4, 4);
  \draw[->,black,thick](0, 2) -- (4.5, 2) node[anchor=north]{$z$};
  \draw[black,thick](-1, 2) -- (0, 2);
  \draw[->,black,thick](0, -0.5) -- (0, 4.5) node[anchor=east]{$\mu$};  
  \draw[-,blue,thick] (0, 0.5) -- (4, 0.5);
  \draw[-,blue,thick] (0, 1.0) -- (4, 1.0);
  \draw[-,blue,thick] (0, 1.5) -- (4, 1.5);
  \draw[-,blue,thick] (0, 2.5) -- (4, 2.5);
  \draw[-,blue,thick] (0, 3.0) -- (4, 3.0);
  \draw[-,blue,thick] (0, 3.5) -- (4, 3.5);
  \node at (0,0.25) [circle,fill,inner sep=1.5pt]{};
  \node at (0,0.75) [circle,fill,inner sep=1.5pt]{};
  \node at (0,1.25) [circle,fill,inner sep=1.5pt]{};
  \node at (0,2.75) [circle,fill,inner sep=1.5pt]{};  
  \node at (0,3.25) [circle,fill,inner sep=1.5pt]{};
  \node at (0,3.75) [circle,fill,inner sep=1.5pt]{};
  \draw[magenta] (0,0) circle (2pt);
  \draw[magenta] (0,4) circle (2pt);
  \node [anchor=east,magenta] at (0,0.00) {$-1$};
  \node [anchor=east] at (0,0.25) {$\mu_1$};
  \node [anchor=east] at (0,0.75) {$\mu_2$};
  \node [anchor=east] at (0,1.25) {$\mu_3$};
  \node [anchor=east] at (0,2.75) {$\mu_{T-2}$};
  \node [anchor=east] at (0,3.25) {$\mu_{T-1}$};
  \node [anchor=east] at (0,3.75) {$\mu_{T}$};
  \node [anchor=east,magenta] at (0,4.00) {$1$};
  \node at (-0.45,2.125+0.25) [circle,fill,inner sep=0.5pt]{};  
  \node at (-0.45,2+0.25) [circle,fill,inner sep=0.5pt]{};    
  \node at (-0.45,1.875+0.25) [circle,fill,inner sep=0.5pt]{};
  \node at (-0.45,2.125-0.25) [circle,fill,inner sep=0.5pt]{};  
  \node at (-0.45,2-0.25) [circle,fill,inner sep=0.5pt]{};    
  \node at (-0.45,1.875-0.25) [circle,fill,inner sep=0.5pt]{};
  \draw[-,gray,thick] (0.75,0) -- (0.75,4);
  \draw[-,gray,thick] (1.5,0) -- (1.5,4);
  \draw[-,gray,thick] (2.50,0) -- (2.50,4);
  \draw[-,gray,thick] (3.25,0) -- (3.25,4);  
  \node [anchor=north] at (0.375,1.95) {$z_1$};
  \node [anchor=north] at (1.125,1.95) {$z_2$};
  \node at (0.375,2) [circle,fill,inner sep=1.5pt]{};
  \node at (1.125,2) [circle,fill,inner sep=1.5pt]{};
  \node at (2.875,2) [circle,fill,inner sep=1.5pt]{};
  \node at (3.625,2) [circle,fill,inner sep=1.5pt]{};
  \node [anchor=north] at (2.875,1.95) {$z_{M-1}$};
  \node [anchor=north] at (3.625,1.95) {$z_{M}$};  
  \node at (1.875,2.25) [circle,fill,inner sep=0.5pt]{};
  \node at (2.000,2.25) [circle,fill,inner sep=0.5pt]{};    
  \node at (2.125,2.25) [circle,fill,inner sep=0.5pt]{};
  \node at (1.875,1.75) [circle,fill,inner sep=0.5pt]{};
  \node at (2.000,1.75) [circle,fill,inner sep=0.5pt]{};    
  \node at (2.125,1.75) [circle,fill,inner sep=0.5pt]{};
\end{tikzpicture}
\end{tabular}
\caption{The H$^T_N$ approximation. Panel (a) shows the discrete velocity bands, each of which is
centered at  $\mu_j \in (-1,1)$ for $j=1,2,\ldots,m$. Panel (b) shows the discrete velocity bands with the physical $z$-mesh superimposed.\label{fig:mucells}}
\end{figure}
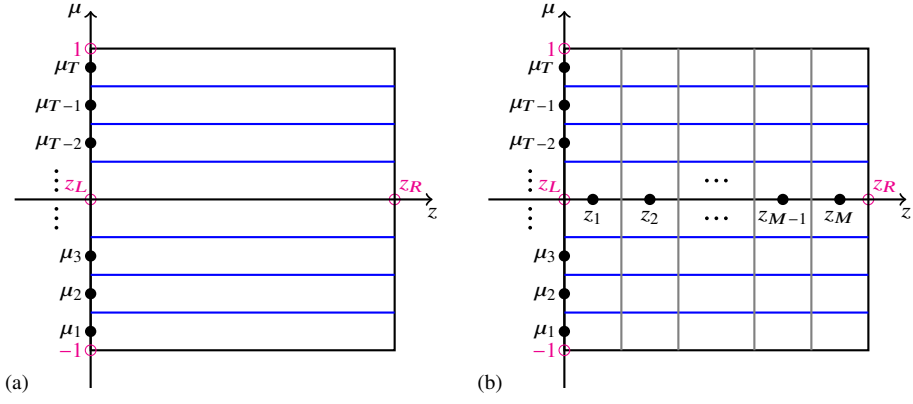

The H$^T_N$ approximation of Shin \cite{shin} begins by constructing a mesh in velocity space. In 1D this means a mesh in the angular variables $\mu \in [-1,1]$:
\begin{equation}
\label{eqn:mu_mesh}
 \bigcup_{j=1}^T \left[\mu_j - \frac{\Delta \mu}{2}, 
  \mu_j + \frac{\Delta \mu}{2} \right], \quad
  \mu_j = -1 + \left(j-\frac{1}{2} \right) \Delta \mu, \quad \text{and} \quad
  \Delta \mu = \frac{2}{T},
\end{equation}
 which we depict in
  \figref{fig:mucells}(a). With this velocity mesh, the H$^T_N$ approach
defines on the velocity band centered at $\mu_j$ the following band-localized intensity:
\begin{equation}\label{eq:hn_ansatz1d}
  {I}(t,z,\mu(\alpha))\bigg|_{\mu \in \left[\mu_j - \frac{\Delta \mu}{2}, 
  \mu_j + \frac{\Delta \mu}{2} \right]} \approx {I}^{(j)}(t,z,\alpha) := \sum_{k=0}^{N} \left(\frac{2k+1}{2} \right) u_k^{(j)}(t,z)p_{k}(\alpha),
\end{equation}
for $j=1,2,\ldots,T$, where $\alpha\in[-1,1]$ is a local variable in each velocity band centered at $\mu_j$ with thickness $\Delta \mu$:
\begin{equation}
 \mu(\alpha)=\mu_{j}+\alpha \left(\frac{\Delta\mu}{2}\right),
\end{equation}
and $p_k(\alpha)$ is the $k^{\text{th}}$ order Legendre polynomial:
\begin{equation}
\label{eqn:legendre_ortho}
\begin{split}
p_k(\alpha) = \left(\frac{2k-1}{k}\right) \alpha & p_{k-1}(\alpha) - \left(\frac{k-1}{k} \right)
p_{k-2}(\alpha), \quad
p_0(\alpha) = 1, \quad p_1(\alpha) = \alpha, \\ &\text{and} \quad
\int_{-1}^{1} p_k(\alpha) \, p_m(\alpha) \, \diff\alpha = \left(\frac{2}{2k+1}\right) \delta_{km}.
\end{split}
\end{equation}
The moments in \eqref{eq:hn_ansatz1d} are defined as
\begin{equation}\label{eq:rhoj1d}
  u^{(j)}_k(t,z)=\int_{-1}^{1}{{I}^{(j)}(t,z,\alpha) \, p_k(\alpha)} \, \diff\alpha,
\end{equation}
for $j=1,2,\cdots,T$. Note that in the H$^T_N$ approximation $T$ is the number of $\mu$-cells,
while $N$ is the number of basis functions used in ansatz \eqref{eq:hn_ansatz1d}.  

Using the H$^T_N$ approximation, on each interval $\mu \in \left[\mu_j - \frac{\Delta \mu}{2}, 
  \mu_j + \frac{\Delta \mu}{2} \right]$, equation \eqref{eq:frequencyfree_grey1d} can be written
  as follows (where we have also temporarily assumed that $s\equiv0$):
\begin{equation}\label{eq:discrete_frequencyfree_grey1d}
\frac{1}{c}\frac{\uppartial {I}^{(j)}}{\uppartial t}+\Big(\mu_j+\alpha\frac{\Delta{\mu}}{2}\Big) \frac{\uppartial {I}^{(j)}}{\uppartial z}+\sigma {I}^{(j)}=\frac{1}{2}{\sigma ac{\theta}^4},
\end{equation}
for $j=1,2,\cdots,T$ and $\alpha\in[-1,1]$. Plugging ansatz \eqref{eq:hn_ansatz1d} into \eqref{eq:discrete_frequencyfree_grey1d}, multiplying the equation by the Legendre polynomial $p_{\ell}$, integrating over $[-1,1]$, and using the orthogonality of
Legendre polynomials \eqref{eqn:legendre_ortho}, gives the following equations:
\begin{equation}
    \frac{1}{c}\frac{\uppartial {u}^{(j)}_{\ell}}{\uppartial t}+\mu_j \frac{\uppartial {u}^{(j)}_{\ell}}{\uppartial z}+\sum_{k=0}^{N}{\color{black}\frac{\Delta{\mu}}{2}} P_{\ell k}  \frac{\uppartial {u}^{(j)}_{k}}{\uppartial z} + \sigma {u}^{(j)}_{\ell}=\sigma ac{\theta}^4\delta_{\ell 0},
\end{equation}
for $\ell=0,1,\cdots,N$, where via the three-term recurrence relationship and orthogonality
from \eqref{eqn:legendre_ortho}:
\begin{equation}
\begin{split}
P_{\ell k} =& \left(\frac{2k+1}{2}\right) \int_{-1}^{1}{\alpha p_k(\alpha) p_{\ell}(\alpha)}\diff\alpha 
=  \left(\frac{k}{2}\right) \int_{-1}^{1}{p_{k-1}(\alpha) p_{\ell}(\alpha)}\diff\alpha \\ +& \left(\frac{k+1}{2}\right) \int_{-1}^{1}{p_{k+1}(\alpha) p_{\ell}(\alpha)}\diff\alpha
=
\begin{cases} 
      \frac{k}{2k-1}, & \text{if}\ \ell=k-1, \\
      \frac{k+1}{2k+3}, & \text{if}\ \ell=k+1, \\
      0, & \text{otherwise}.
   \end{cases}
   \end{split}
\end{equation}
We can write the resulting system of equations as follows:
\begin{equation}\label{eq:hn1d_2}
\frac{1}{c}\,\frac{\uppartial \vec{u}^{(j)}}{\uppartial t} + \mat{A}^{(j)} \, \frac{\uppartial \vec{u}^{(j)}}{\uppartial z} = \vec{q}\left(\vec{u}^{(j)},\theta\right),
\end{equation}
for $j=1,2,\cdots,T$, where
$\displaystyle \vec{u}^{(j)}=\begin{pmatrix}
         u_0^{(j)} & u_1^{(j)} & \dots & u_{N}^{(j)} \end{pmatrix}^{T} \in \reals^{N+1}$,
\begin{align}
\label{eq:matA1d_sub}
\mat{A}^{(j)} &= \mu_j \mat{I} + \frac{\Delta \mu}{2} \begin{pmatrix}[1.1]
    \ 0\ &\ 1&\ &\ &\ &\  \\
    \ \frac{1}{3}\ &\ 0&\ \frac{2}{3}&\ &\ &\  \\
    \ \ &\ \frac{2}{5}&\ 0&\ \ddots
    &\ &\ &\  \\
    \   &\  &\ \ddots &\ \ddots&\ \frac{N-1}{2N-3}
    &\   \\    
    \ \  &\  &\  &\  \frac{N-1}{2N-1}&\ 0&\ \frac{N}{2N-1} \\    
    \ \ &\ &\ &\ &\  \frac{N}{2N+1}&\ 0
\end{pmatrix}, \\
 \vec{q}\left( \vec{u}^{(j)}, \theta \right) &= -\sigma\vec{u}^{(j)}+
\sigma  ac{\theta}^4 \vec{e}_1,
\end{align}
where $\vec{e}_1=(1,0,\ldots,0)^T \in \reals^{(N+1)}$, 
$\mat{I},\mat{A}^{(j)} \in \reals^{(N+1)\times (N+1)}$, and
$\vec{q}\left( \vec{u}^{(j)},\theta \right): \reals^{(N+1)} \times \reals_{\ge 0} \mapsto \reals^{(N+1)}$.
Finally, we obtain the linear system:
\begin{equation}\label{eq:hyperbolic1d}
\frac{1}{c}\,\frac{\uppartial \vec{u}}{\uppartial t} +\mat{A}\ \frac{\uppartial \vec{u}}{\uppartial z}=\vec{Q}\left( \vec{u}, \theta \right),
\end{equation}
where
\begin{equation}\label{eq:matA1d}
\mat{A}=\begin{pmatrix}[1.1]
    \mat{A}^{(1)}&      \\
     &\ddots   \\
     &&\mat{A}^{(T)}
\end{pmatrix},\quad
    \vec{Q}\left(\vec{u},\theta \right)=\begin{pmatrix}[1.1]
    \ \vec{q}\left(\vec{u}^{(1)}, \theta \right)\\
    \ \vdots\\
    \ \vec{q}\left(\vec{u}^{(T)}, \theta \right)
\end{pmatrix},\quad \text{and} \quad
\vec{u}=\begin{pmatrix}[1.1]
    \ \vec{u}^{(1)}\\
    \ \vdots\\
    \ \vec{u}^{(T)}
\end{pmatrix},
\end{equation}
where $\mat{A} \in \reals^{(N+1)T \times (N+1)T}$, 
$\mat{Q}\left(\vec{u},\theta\right): \reals^{(N+1)T} \times \reals_{\ge 0} \mapsto \reals^{(N+1)T}$, 
and $\vec{u} \in \reals^{(N+1)T}$.
\begin{prop}
Equation \eqref{eq:hyperbolic1d}--\eqref{eq:matA1d} is a system of {\color{black}strictly} hyperbolic partial differential equation.
\end{prop}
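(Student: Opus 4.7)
The plan is to verify the defining criterion for hyperbolicity of a linear first-order PDE system: the coefficient matrix of the spatial derivative must be real-diagonalizable. Since $c>0$ and the source term $\vec{Q}(\vec{u},\theta)$ play no role in this criterion, only the matrix $\mat{A}$ needs to be analyzed. Because $\mat{A}$ is block diagonal (see (\ref{eq:matA1d})), its spectrum and Jordan structure are the disjoint union of those of the $T$ blocks $\mat{A}^{(j)}$, so it suffices to show that each $\mat{A}^{(j)}\in\reals^{(N+1)\times(N+1)}$ is real-diagonalizable.

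Next, I would write $\mat{A}^{(j)} = \mu_j \mat{I} + (\Delta\mu/2)\,\mat{B}$, where $\mat{B}$ is the fixed tridiagonal matrix from the Legendre three-term recurrence and is independent of $j$. Since translation by a real scalar multiple of $\mat{I}$ preserves both diagonalizability and the reality of the spectrum, the task reduces one step further to showing that $\mat{B}$ alone is real-diagonalizable.

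To handle $\mat{B}$, I would apply the classical symmetrization of a Jacobi-type tridiagonal matrix. The entries read off from (\ref{eq:matA1d_sub}) are $\mat{B}_{k,k-1} = k/(2k+1)$ and $\mat{B}_{k-1,k} = k/(2k-1)$, so their product is strictly positive. The standard choice $\mat{D} = \mathrm{diag}(d_0,\ldots,d_N)$ with $d_k = 1/\sqrt{2k+1}$ then makes $\mat{D}^{-1}\mat{B}\mat{D}$ a real symmetric tridiagonal matrix with off-diagonal entries $k/\sqrt{(2k-1)(2k+1)}$; this is a short direct check equivalent to the identity $(d_k/d_{k-1})^2 = (2k-1)/(2k+1)$. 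Because real symmetric matrices are orthogonally diagonalizable with real eigenvalues, $\mat{B}$ is similar to such a matrix and hence is itself real-diagonalizable. Chaining the reductions back up, each $\mat{A}^{(j)}$ and therefore $\mat{A}$ is real-diagonalizable, which establishes the hyperbolicity of (\ref{eq:hyperbolic1d}).

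I do not anticipate any genuine obstacle: the structural reductions (block diagonal, scalar shift) are immediate, and the symmetrization is a standard device for matrices arising from three-term recurrences of orthogonal polynomials. The only care needed is in lining up the indices when reading the entries of $\mat{B}$ off (\ref{eq:matA1d_sub}); everything else follows from well-known linear algebra. As a side remark, one may further identify the eigenvalues of $\mat{B}$ with the roots of $p_{N+1}$, which lie in $(-1,1)$ and yield the wave-speed bound $|\lambda(\mat{A}^{(j)})| \le |\mu_j| + \Delta\mu/2 \le 1$, but this refinement is not needed for hyperbolicity per se and I would omit it from the proof.
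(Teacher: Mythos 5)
Your proof is correct and follows essentially the same route as the paper's: reduce to the block-diagonal structure and then conjugate each tridiagonal block by a diagonal matrix to obtain a real symmetric tridiagonal matrix, whose real spectrum and diagonalizability transfer back by similarity. The only cosmetic differences are that you first strip off the scalar shift $\mu_j\mat{I}$ and give the symmetrizer in the explicit closed form $d_k=1/\sqrt{2k+1}$, whereas the paper works with the shifted matrix directly and expresses the diagonal entries through the ratios $\sqrt{c_k/b_k}$.
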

\begin{proof}
To prove hyperbolicity we must show that the matrix $\mat{A} \in \reals^{(N+1)T \times (N+1)T}$ in \eqref{eq:hyperbolic1d}--\eqref{eq:matA1d} is diagonalizable with only real eigenvalues.
Showing this is equivalent to showing that each block matrix $\mat{A}^{(j)} \in \reals^{(N+1)\times (N+1)}$ is diagonalizable with only real eigenvalues, since $\mat{A}$ is simply a block diagonal matrix with blocks $\mat{A}^{(j)}$ for $j=1,2,\ldots,T$. 

To show that $\mat{A}^{(j)}$ is diagonalizable with only real eigenvalues, we write the tridiagonal matrix $\mat{A}^{(j)}$ as follows and define a diagonal matrix $\mat{D}$:
\begin{equation}
    \mat{A}^{(j)}=\begin{pmatrix}
     a& b_1      \\
     c_1&a&b_2     \\
     &c_2&a&\ddots   \\
    &&\ddots&\ddots&b_{N-1}     \\
    &&&c_{N-1}&a&b_{N}\\
     &&&&c_{N}&a
    \end{pmatrix},\qquad
    \mat{D}=\begin{pmatrix}
     1&       \\
      &\sqrt{\frac{c_1}{b_1}}&    \\
     &&\sqrt{\frac{c_2}{b_2}}&   \\
    &&&\ddots&    \\
     &&&&\sqrt{\frac{c_{N}}{b_{N}}}
    \end{pmatrix},
\end{equation}
where $a=\mu_j$ and for all $k=1,2,\cdots,N$: $b_k = k/(2k-1)\neq 0$ and $c_k = (k+1)/(2k+3) \neq0$.
It can then easily be shown that $\mat{P}^{(j)}={\mat{D}}^{-1}\,\mat{A}^{(j)}\,\mat{D}$, where
\begin{equation}
    \mat{P}^{(j)}=\begin{pmatrix}
     a& \sqrt{b_1c_1} &&&&     \\
     \sqrt{b_1c_1}&a&\sqrt{b_2c_2} &&&    \\
     &\sqrt{b_2c_2}&a&\ddots &&  \\
    &&\ddots&\ddots&\sqrt{b_{N-1}c_{N-1}}&     \\
    &&&\sqrt{b_{N-1}c_{N-1}}&a&\sqrt{b_Nc_N}\\
     &&&&\sqrt{b_Nc_N}&a
    \end{pmatrix}.
\end{equation}
Now since the matrix $\mat{P}^{(j)}$ is real and symmetric, it has only real eigenvalues. Also, all the eigenvalues of $\mat{P}^{(j)}$ are distinct since all off-diagonal elements are nonzero \cite{parlett}. Hence, $\mat{A}^{(j)}$ has only real eigenvalues and all the eigenvalues are distinct by similarity to $\mat{P}^{(j)}$. \qed
\end{proof}

\begin{prop}
The eigenvalues of the matrix  $\mat{A} \in \reals^{(N+1)T \times (N+1)T}$ in \eqref{eq:hyperbolic1d}--\eqref{eq:matA1d}  are real, distinct, and given by
\begin{equation}
\lambda_{k+(j-1)(N-1)} = \mu_j + \left(\frac{\Delta \mu}{2} \right) s_k
   \quad \text{for} \quad k=1,\ldots,N+1, \quad 
   j=1,\ldots,T,
\end{equation}
where $s_k \in (-1,1)$ for $k=1,2,\ldots,N+1$ are the real distinct roots of the degree $(N+1)$ Legendre polynomial. If we assume that the $N+1$ Legendre roots are ordered so that
$s_{N+1}$ is the largest root (i.e., closest to +1), then the largest eigenvalue (in absolute value) of $\mat{A}$ is given by
\begin{equation}
\label{eqn:spectral_radius}
    \rho{(\mat{A})}=-\lambda_1 = \lambda_{(N+1)T} = \mu_T+\left(\frac{\Delta{\mu}}{2}\right) s_{N+1}.
\end{equation}
\end{prop}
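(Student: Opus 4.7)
The plan is to exploit the block-diagonal structure of $\mat{A}$, reducing the problem to the spectrum of a single tridiagonal block, and then identify that block as a Jacobi-style matrix whose eigenvalues are precisely the roots of a Legendre polynomial. Since $\mat{A}=\mathrm{diag}(\mat{A}^{(1)},\ldots,\mat{A}^{(T)})$, the spectrum of $\mat{A}$ is the multiset union of the spectra of the $\mat{A}^{(j)}$, so it suffices to analyze one block. Writing $\mat{A}^{(j)} = \mu_j \mat{I} + (\Delta\mu/2)\mat{M}$, where $\mat{M}$ is the tridiagonal matrix with entries $M_{\ell,\ell-1}=\ell/(2\ell+1)$ and $M_{\ell,\ell+1}=(\ell+1)/(2\ell+1)$, the eigenvalues of $\mat{A}^{(j)}$ are $\mu_j + (\Delta\mu/2)s$ for each eigenvalue $s$ of $\mat{M}$.

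The key step is to show that the eigenvalues of $\mat{M}$ are exactly the $N+1$ roots of $p_{N+1}$. I would do this by exhibiting an explicit eigenvector: for any root $s$ of $p_{N+1}$, set $\vec{v}(s) = (p_0(s),p_1(s),\ldots,p_N(s))^\Tr$. The Legendre three-term recurrence \eqref{eqn:legendre_ortho} rearranges to
\begin{equation*}
\alpha\, p_\ell(\alpha) = \frac{\ell}{2\ell+1}\, p_{\ell-1}(\alpha) + \frac{\ell+1}{2\ell+1}\, p_{\ell+1}(\alpha),
\end{equation*}
which is exactly the action of $\mat{M}$ on $\vec{v}(s)$ in rows $0,\ldots,N-1$. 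In the last row the would-be coupling to $p_{N+1}(s)$ is truncated, but since $s$ is chosen so that $p_{N+1}(s)=0$, this truncation costs nothing and the identity $\mat{M}\vec{v}(s)=s\vec{v}(s)$ persists. The standard fact that $p_{N+1}$ has $N+1$ distinct simple roots in $(-1,1)$ (e.g.\ from orthogonality) then gives $N+1$ distinct eigenvalues, which exhausts the spectrum of the $(N+1)\times(N+1)$ matrix $\mat{M}$; the previous proposition already guarantees there are no others.

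Combining these facts and adding the diagonal shift yields $\lambda_{k+(j-1)(N+1)} = \mu_j + (\Delta\mu/2) s_k$ for $k=1,\ldots,N+1$ and $j=1,\ldots,T$, accounting for all $(N+1)T$ eigenvalues. For the spectral radius formula \eqref{eqn:spectral_radius}, I would use the symmetry of the $\mu$-mesh (the nodes satisfy $\mu_j = -\mu_{T+1-j}$) together with the symmetry of the Legendre roots ($s_k = -s_{N+2-k}$). These two symmetries imply that negating any eigenvalue yields another eigenvalue, so $\rho(\mat{A})$ is attained at the largest positive eigenvalue; this largest value is obtained by choosing the largest band center $\mu_T$ and the largest Legendre root $s_{N+1}$, giving \eqref{eqn:spectral_radius}.

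The only nontrivial obstacle is the identification of $\mat{M}$ with the Jacobi matrix of the Legendre family and the boundary-row verification, which I have sketched above; everything else is bookkeeping from the block-diagonal structure and from standard properties of orthogonal polynomials.
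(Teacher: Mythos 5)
Your proof is correct and follows the same overall route as the paper: reduce to a single block via the block-diagonal structure, identify the block eigenvalues as $\mu_j + (\Delta\mu/2)\,s_k$, and use the symmetries $\mu_j=-\mu_{T+1-j}$ and $s_k=-s_{N+2-k}$ to locate the spectral radius. The one substantive difference is that the paper outsources the key step --- that the eigenvalues of the tridiagonal block are the (shifted, scaled) roots of $p_{N+1}$ --- to a citation of Cohen, whereas you prove it directly by exhibiting the eigenvector $\vec{v}(s)=(p_0(s),\ldots,p_N(s))^{\Tr}$ via the rearranged three-term recurrence and checking that the truncated last row is consistent precisely because $p_{N+1}(s)=0$. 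This is the standard Jacobi-matrix argument and it makes the proposition self-contained; your count of $N+1$ distinct roots correctly exhausts the spectrum of the $(N+1)\times(N+1)$ block (and $\vec{v}(s)\neq\vec{0}$ since $p_0(s)=1$), so nothing is missing.
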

\begin{proof}
It can be shown that the eigenvalues of $\mat{A}^{(j)} \in \reals^{(N+1)\times (N+1)}$ from
\eqref{eq:hn1d_2} are given by the following formula (e.g., see Cohen \cite{article:Cohen1996}):
\begin{equation}
 \lambda^{(j)}_k = \mu_j + \left(\frac{\Delta \mu}{2} \right) s_k, \quad \text{for}
 \quad  k=1,2,\ldots,N+1,
\end{equation}
where $s_k \in (-1,1)$ for $k=1,2,\ldots,N+1$ are the roots of the degree $(N+1)$ Legendre polynomial.  Since $\mat{A}$ is just a block diagonal matrix with $\mat{A}^{(j)} \in \reals^{(N+1)\times(N+1)}$ as the blocks for $j=1,2,\ldots,T$ (as shown in \eqref{eq:matA1d}), it follows that the eigenvalues of $\mat{A}$ are of the form:
\begin{equation}
\label{eqn:evalues}
   \lambda_{k+(j-1)(N+1)} = \mu_j + \left(\frac{\Delta \mu}{2} \right) s_k
   \quad \text{for} \quad k=1,\ldots,N+1, \quad
   j=1,\ldots,T.
\end{equation}

The most negative and most positive eigenvalues in \eqref{eqn:evalues} are $\lambda_1$ ($k=1$, $j=1$)
and $\lambda_{(N+1)T}$ ($k=N+1$, $j=T$), respectively. This follows directly from the assumed
ordering of $s_k$, as well as the ordering assumed in the definition of $\mu_j$ (see equation
\eqref{eqn:mu_mesh}). In fact, these two eigenvalues have the same magnitude, since by definition: $s_1 = -s_{N+1}$ and $\mu_1 = -\mu_T$. This gives the desired result: equation \eqref{eqn:spectral_radius}.
\qed
\end{proof}
{\color{black}
\begin{remark}
The importance of the formulation of the hyperbolic system in numerical simulations can be explained by two factors, stability and well-posedness. For example, the stability of the initial value problem for strongly hyperbolic systems is shown in Chapter 5 of \cite{gustaffson} and stiff well-posedness of the Cauchy problem for strongly hyperbolic system has been proved in \cite{lorenz}.
\end{remark}}
\begin{prop}
$\rho{(\mat{A})} < 1$ for any $T$ and $N$ of H$^T_N$.
\end{prop}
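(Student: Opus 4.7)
The plan is to exploit the explicit formula for $\rho(\mat{A})$ just established in the previous proposition, together with the elementary fact that the roots of the Legendre polynomial lie strictly inside $(-1,1)$. In particular, I would start from
\begin{equation*}
\rho(\mat{A}) = \mu_T + \left(\frac{\Delta\mu}{2}\right) s_{N+1},
\end{equation*}
where $s_{N+1} \in (-1,1)$ is the largest root of the degree $(N+1)$ Legendre polynomial.

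Next, I would compute $\mu_T$ explicitly from the velocity mesh definition \eqref{eqn:mu_mesh}. Substituting $j=T$ and $\Delta\mu = 2/T$ gives $\mu_T = -1 + (T-1/2)\Delta\mu = 1 - \Delta\mu/2$. Plugging this into the expression above yields
\begin{equation*}
\rho(\mat{A}) = 1 - \frac{\Delta\mu}{2} + \frac{\Delta\mu}{2}\, s_{N+1} = 1 + \frac{\Delta\mu}{2}\bigl(s_{N+1} - 1\bigr).
\end{equation*}

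Since $\Delta\mu > 0$ for all $T\ge 1$ and $s_{N+1} - 1 < 0$ because the Legendre roots lie strictly in the open interval $(-1,1)$, the second term is strictly negative, giving $\rho(\mat{A}) < 1$ as required. The main (and essentially only) nontrivial input is the classical fact that all roots of the Legendre polynomial $p_{N+1}$ are strictly interior to $[-1,1]$; this is a standard orthogonal polynomial result that I would simply cite rather than reprove. Beyond that, the argument is a one-line algebraic manipulation, so I do not anticipate any serious obstacle.
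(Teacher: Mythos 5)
Your argument is correct and is essentially identical to the paper's: both start from the formula $\rho(\mat{A}) = \mu_T + (\Delta\mu/2)\,s_{N+1}$, substitute $\mu_T = 1 - \Delta\mu/2$ from the velocity mesh definition, and conclude from $s_{N+1} < 1$ that $\rho(\mat{A}) < 1$. The only cosmetic difference is that you factor the result as $1 + (\Delta\mu/2)(s_{N+1}-1)$ while the paper bounds the sum directly.
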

\begin{proof}
A simple calculation, using the fact that $s_{N+1}<1$, shows that
\begin{gather*}
\rho{(\mat{A})} = \mu_T + \left(\frac{\Delta \mu}{2} \right) s_{N+1}
= 1 - \left(\frac{\Delta \mu}{2} \right) + \left(\frac{\Delta \mu}{2} \right) s_{N+1}
< 1 - \left(\frac{\Delta \mu}{2} \right) + \left(\frac{\Delta \mu}{2} \right) = 1, \\
\Longrightarrow \quad \rho{(\mat{A})} < 1.
\end{gather*}
Furthermore, we note that $\rho{(\mat{A})} = 1 - {\mathcal O}(\Delta \mu)$ as $\Delta \mu \rightarrow 0$.
\qed
\end{proof}
{\color{black}
\begin{remark}
The H$^T_N$ model possesses the correct physical property that the propagation rate cannot exceed the speed of light, i.e., the characteristic speed is less than the speed of light.
\end{remark}}
\begin{remark}
\label{sec:remark_eigenvalue}
We have shown via the above propositions that all of the eigenvalues of $\mat{A}$ are real, distinct,  and strictly between $-1$ and $1$. In practice, we would also like to impose the condition that 
$\mat{A}$ not have a zero eigenvalue; this additional requirement is useful especially in the computation of steady-state solutions and in the imposition of inflow/outflow boundary conditions. We can always ensure that $\mat{A}$ does not have a zero eigenvalue if either of the following conditions are satisfied:
\begin{enumerate}
\item $T$ is even; or
\item $T$ is odd and $N$ is odd.
\end{enumerate} 
These assertions follow directly from definition \eqref{eqn:evalues}.
\end{remark}

\subsection{\bf The radiation energy density}\label{sec:hd1d-3}
Using the H$^T_N$ asantz \eqref{eq:hn_ansatz1d}, the angular moment of the radiation density \eqref{eq:moment1} can be
written as follows:
\begin{equation}\label{eq:moment_E}
\begin{split}
    E(t,z)&:=\frac{1}{c}\int_{-1}^{1}{I(t,z,\mu)}\diff\mu
    =\frac{1}{c}\sum_{j=1}^{T}{\color{black}\frac{\Delta{\mu}}{2}}{\int_{-1}^{1}{I^{(j)}(t,z,\alpha)}\diff\alpha}\\
      &={\color{black}\frac{1}{c}}\sum_{j=1}^{T}{\color{black}\frac{\Delta{\mu}}{2}}{\int_{-1}^{1}{\sum_{k=0}^{N} 
      \left(\frac{2k+1}{2}\right) u_k^{(j)}(t,z)p_{k}(\alpha)}}\diff\alpha
      ={\color{black}\frac{1}{c}}\sum_{j=1}^{T}{\color{black}\frac{\Delta{\mu}}{2}}{u^{(j)}_{0}(t,z)}.
\end{split}
\end{equation}
Using this result, we can rewrite material-energy equation \eqref{eq:material1d} as follows:
\begin{equation}
\label{eqn:final_mat_energy}
    {C_v} \frac{\uppartial \theta}{\uppartial t}={\color{black}{\sigma }}\sum_{j=1}^{T}{\color{black}\frac{\Delta{\mu}}{2}}{u^{(j)}_{0}(t,z)}-\sigma ac{\theta}^4.
\end{equation}

\section{Semi-implicit discontinuous Galerkin (DG) method}\label{sec:dg}

In this section we develop a discontinuous Galerkin (DG) finite element method with a semi-implicit time discretization for solving the 1D H$^T_N$ system \eqref{eq:hyperbolic1d}--\eqref{eq:matA1d} along with the material-energy equation \eqref{eqn:final_mat_energy}. 
The strategy that we employ is similar to the method developed by McClarren et al. \cite{mcclarren1} for the P$_N$ equations, but here we extend that work to the H$^T_N$ approximation as described above.

\subsection{\bf Discontinuous Galerkin spatial discretization}
\label{sec:dg1}
The H$^T_N$ approximation as described in the previous sections divides the phase space into discrete velocity bands, inside of which we approximate the specific intensity, $I(t,x,\mu)$, by a finite polynomial ansatz; this is illustrated in \figref{fig:mucells}(a). In order to spatially discretize the resulting H$^T_N$ equations: \eqref{eq:hyperbolic1d}--\eqref{eq:matA1d} and \eqref{eqn:final_mat_energy}, we additionally introduce a mesh in the $z$-coordinate:
\begin{gather}
 \left[ z_L, z_R \right] = \bigcup_{i=1}^{N_z} {\mathcal T}_i, \quad \text{where} \quad
 {\mathcal T}_i = \left[z_{i}-\frac{\Delta z}{2}, z_{i}+\frac{\Delta z}{2}\right], \\
 z_{i} = z_L + \left(i - \frac{1}{2} \right) \Delta z, \quad \text{and} \quad
 \Delta z = \frac{z_R - z_L}{N_z}.
\end{gather} 
The full $z-\mu$ phase space mesh is illustrated in \figref{fig:mucells}(b).
On each mesh element we define a local coordinate as follows:
\begin{equation}
z\Bigl|_{\Tm_i} = z_i + \left(\frac{\Delta z}{2} \right) \xi, \qquad \text{where} \quad \xi \in \left[-1,1\right].
\end{equation}

The test and trial functions for the discontinuous Galerkin scheme will come
from following {\it broken} finite element spaces:
\begin{equation}
\label{eqn:broken_space}
    \WS^{\Delta z}_{\meq} := \left\{ \vec{w}^{\Delta z} \in \Bigl[ L^{\infty}\left[ z_L, z_R \right] \Bigr]^{\meq}: \,
    \vec{w}^{\Delta z} \Bigl|_{\Tm_i} \in \left[ {\mathbb P} \left(\mdeg\right) \right]^{\meq} \, \, \forall \Tm_i \right\},
\end{equation}
where $\meq$ is the number of equations and ${\mathbb P} \left(\mdeg\right)$ is the set
of all polynomials with maximum polynomial order $\mdeg$.
On each mesh element we define the $\mdeg+1$ Gauss-Lobatto points: $\xi_j \in \left[-1,1\right]$ for $j=1,\ldots,\mdeg+1$ (e.g., see \cite{web:wikipedia_GL}). For each Gauss-Lobatto point we define the associated Lagrange interpolating polynomial:
\begin{equation}
 \Phi_j\left(\xi\right) = \prod_{\underset{k\ne j}{k=1}}^{\mdeg+1} \frac{\left( \xi - \xi_k \right)}{\left(\xi_j - \xi_k \right)}, \quad \text{s.t.} \quad
 \Phi_j\left(\xi_\ell \right) = \delta_{j\ell} \quad \text{for} \quad j,\ell = 1,\ldots,\mdeg+1.
\end{equation} 
The approximate solution on each element can then be written as 
\begin{align}
\label{eqn:dg_approx}
\Bigl\{ \vec{u}^{\Delta z}\left(t,z\left(\xi\right)\right), \,
{\theta}^{\Delta z}\left(t,z\left(\xi\right)\right) \Bigr\}
\Biggl|_{\Tm_i} = \sum_{j=1}^{\mdeg+1} \Bigl\{ \vec{U}_{ij}\left(t\right), \, \Theta_{ij}\left(t\right)
\Bigr\} \, \Phi_j\left(\xi\right), 
\end{align}
where $\vec{u}^{\Delta z}(t,z): \reals_{\ge 0} \times \reals \mapsto \WS^{\Delta z}_{(N+1)T}$,
$\theta^{\Delta z}(t,z): \reals_{\ge 0} \times \reals \mapsto \WS^{\Delta z}_{1}$,
$\Phi_j\left(\xi\right): [-1,1] \mapsto \reals$, and $\vec{U}_j(t): \reals_{\ge 0} \mapsto
\reals^{(N+1)T}$. Similarly, we write the approximate source as
\begin{equation}
\label{eqn:dg_source}
\vec{Q}^{\Delta z}\left(\vec{u}^{\Delta z}, \theta^{\Delta z} \right)\Biggl|_{\Tm_i} = \sum_{j=1}^{\mdeg+1} \vec{Q}\left(\vec{U}_{ij}\left(t\right), \Theta_{ij}(t) \right) \, \Phi_j\left(\xi\right).
\end{equation}

To obtain the spatially discretized version of H$^T_N$ system \eqref{eq:hyperbolic1d} on each element $\Tm_i$, we replace the exact solution by \eqref{eqn:dg_approx}, the exact source 
by \eqref{eqn:dg_source}, multiply \eqref{eq:hyperbolic1d} by a test function $\Phi_k(\xi)$, integrate over the element, and apply integration-by-parts in $\xi$:
\begin{equation}
\label{eqn:mass_and_der_matrices}
\begin{split}
    \frac{1}{c}\,\sum_{j=1}^{\mdeg+1} \frac{\diff\vec{U}_{ij}(t)}{\diff t} M_{kj} \, + \, \left(\frac{2}{\Delta z}\right) \left[ \Phi_k(1) \, {\mathcal F}_{i+\half}(t) 
    \, -  \, \Phi_k(-1) \, {\mathcal F}_{i-\half}(t) \right] \\ - \left(\frac{2}{\Delta z}\right) \mat{A}\sum_{j=1}^{\mdeg+1} \vec{U}_{ij}(t) N_{kj} =
    \sum_{j=1}^{\mdeg+1} \vec{Q}\left(\vec{U}_{ij}(t), \Theta_{ij}(t)\right) M_{kj},
    \end{split}
\end{equation}
for each $k=1,\ldots,\mdeg$, where we used the fact that $\uppartial_z = \left(2/\Delta z \right) \, \uppartial_{\xi}$, and where 
\begin{align}
M_{kj} = \int_{-1}^{1}\Phi_k(\xi) \, \Phi_j(\xi) \, \diff \xi \qquad \text{and} \qquad
N_{kj} = \int_{-1}^{1} \Phi_j(\xi) \, \Phi'_k(\xi) \, \diff \xi.
\end{align}
The numerical flux on each element face is defined as follows:
\begin{equation}
\begin{split}
{\mathcal F}_{i-\half}(t) &= \frac{1}{2} \mat{A} \sum_{j=1}^{\mdeg+1} \Bigl( \vec{U}_{i-1 \, j}(t) 
\, \Phi_j(1)
+ \vec{U}_{ij}(t) \, \Phi_j(-1) \Bigr) \\ &+ \frac{1}{2} \left|\mat{A}\right| \sum_{j=1}^{\mdeg+1} \Bigl( \vec{U}_{i-1 \, j}(t) \, \Phi_j(1)
- \vec{U}_{ij}(t) \, \Phi_j(-1) \Bigr),
\end{split}
\end{equation}
for $i=1,\ldots,N_z+1$.
In the above expression,  $|\mat{A}|$ is defined through the eigenvalues of $\mat{A}$:
\begin{equation}
\mat{A} = \mat{V} \mat{\Lambda} \mat{V}^{-1} \quad \Longrightarrow \quad
\left|\mat{A}\right| = \mat{V} \left|\mat{\Lambda}\right| \mat{V}^{-1},
\end{equation}
where $\mat{V}$ is the matrix of right eigenvectors of $\mat{A}$, $\mat{\Lambda}=\text{diag}\left(\lambda_1,\ldots,\lambda_{(N+1)T} \right)$ is the diagonal matrix of eigenvalues of $\mat{A}$, and $\left|\mat{\Lambda}\right| = \text{diag}\left(|\lambda_1|,\ldots,|\lambda_{(N+1)T}| \right)$.

In order to close this semi-discrete system, we also need to semi-discretize material-energy equation \eqref{eqn:final_mat_energy}. Following all of the above outlined procedures, this results in the
following equation:
\begin{equation}
    {C_v} \sum_{j=1}^{\mdeg+1} \frac{\diff \Theta_{ij}(t)}{\diff t} M_{kj} =  \sum_{j=1}^{\mdeg+1} \Bigg(\sigma \sum_{\ell=1}^{T}{\color{black}\frac{\Delta{\mu}}{2}}{U^{(\ell)}_{ij(1)}(t)}-\sigma ac\left(\Theta_{ij}(t)\right)^4\Bigg)
    M_{kj},
\end{equation}
where $U^{(\ell)}_{ij(1)}$ refers to the $(1+(\ell-1)(N+1))^{\text{th}}$ component of $\vec{U}_{ij}$
(i.e., the first component of $\vec{U}_{ij}$ in the $\ell^{\text{th}}$ velocity band).

For the remainder of this paper we consider the case $\mdeg=1$ (i.e., the piecewise linear DG approximation), which yields a second-order accurate spatial approximation. The basis functions in this case are 
\begin{equation}
\Phi_1(\xi) = \frac{1}{2} \left( 1 - \xi \right) \qquad \text{and} \qquad
\Phi_2(\xi) = \frac{1}{2} \left( 1 + \xi \right).
\end{equation}
In this case, all of the above expressions simplify greatly. For example, the values defined in
\eqref{eqn:mass_and_der_matrices} reduce to the following:
\begin{equation}
\mat{M} = \frac{1}{3}\begin{bmatrix}[1.1]
2 & 1 \\ 1 & 2
\end{bmatrix} \qquad \text{and} \qquad
\mat{N} = \frac{1}{2} \begin{bmatrix}[1.1] 
-1 & -1 \\ \, \, \, \, \, 1 & \, \, \, \, \, 1
\end{bmatrix}.
\end{equation}
After some simple algebra, we arrive at the following semi-discrete system:
\begin{align}
\frac{1}{c} \frac{\diff\vec{U}_{i1}}{\diff t}   &-  
 \frac{ 2 {\mathcal F}_{i+\half}  + 4 {\mathcal F}_{i-\half} - 
3 \mat{A} \bigl( \vec{U}_{i1} + \vec{U}_{i2} \bigr)}{\Delta z} =
    \vec{Q}\Bigl(\vec{U}_{i1}, \Theta_{i1}\Bigr), \\
    \frac{1}{c} \frac{\diff\vec{U}_{i2}}{\diff t}  &+  
 \frac{  4 {\mathcal F}_{i+\half}  + 2 {\mathcal F}_{i-\half} - 
3 \mat{A} \bigl( \vec{U}_{i1}+ \vec{U}_{i2} \bigr)}{\Delta z}  =
    \vec{Q}\Bigl(\vec{U}_{i2}, \Theta_{i2}\Bigr), \\
     {C_v} \frac{\diff \Theta_{ij}}{\diff t} &=  \sigma\sum_{\ell=1}^{T}{\color{black}\frac{ \Delta{\mu}}{2}}{U^{(\ell)}_{ij(1)}}-\sigma ac\Theta_{ij}^4, \quad \text{for} \, \, j=1,2,
\end{align}
for $i=1,\ldots,(N+1)T$. The numerical fluxes in the $\mdeg=1$ reduces to the following:
\begin{equation}
\begin{split}
{\mathcal F}_{i-\half}(t) &= \frac{1}{2} \mat{A} \bigl( \vec{U}_{i \, 1}(t) + \vec{U}_{i-1 \, 2}(t) \bigr) - \frac{1}{2} \left|\mat{A}\right| \bigl( \vec{U}_{i \, 1}(t) - \vec{U}_{i-1 \, 2}(t) \bigr),
\end{split}
\end{equation}
for $i=1,\ldots,N_z+1$.

\subsection{\bf Semi-implicit time scheme: nonlinear version}\label{sec:dg-2}
For the time integration we adopt the semi-implicit scheme by McClarren et al. \cite{mcclarren1}, which
 is a two-stage Runge-Kutta method.
The first stage (i.e., the predictor step) can be written as follows:
\begin{align}
\label{eq:predictor1}
\frac{1}{c} \frac{\vec{U}^{n+\half}_{i1}-\vec{U}^{n}_{i1}}{\Delta t/2}   &=  
 +\frac{ 2 {\mathcal F}_{i+\half}^n  + 4 {\mathcal F}_{i-\half}^n - 
3 \mat{A} \bigl( \vec{U}_{i1}^n + \vec{U}_{i2}^n \bigr)}{\Delta z}  +
    \vec{Q}\Bigl(\vec{U}^{n+\half}_{i1}, \Theta_{i1}^{n+\half} \Bigr), \\
\label{eq:predictor2}
    \frac{1}{c} \frac{\vec{U}^{n+\half}_{i2}-\vec{U}^{n}_{i2}}{\Delta t/2}  &=  
- \frac{  4 {\mathcal F}^n_{i+\half}  + 2 {\mathcal F}^n_{i-\half} - 
3 \mat{A} \bigl( \vec{U}^n_{i1}+ \vec{U}^n_{i2} \bigr)}{\Delta z}  +
    \vec{Q}\Bigl(\vec{U}_{i2}^{n+\half}, \Theta_{i2}^{n+\half} \Bigr), \\
\label{eq:predictor3}
    {C_v} \frac{\Theta^{n+\half}_{ij}-\Theta^{n}_{ij}}{\Delta t/2} &=  \sigma\sum_{\ell=1}^{T}{\color{black}\frac{ \Delta{\mu}}{2}}{U^{(\ell) \, n+\half}_{ij(1)}}-\sigma ac\left(\Theta_{ij}^{n+\half} \right)^4, 
     \quad \text{for} \, \, j=1,2.
\end{align}
Similarly, the second stage (i.e., the correction step) can be written as follows:
\begin{align}
\label{eq:corrector1}
\frac{1}{c} \frac{\vec{U}^{n+1}_{i1}-\vec{U}^{n}_{i1}}{\Delta t}   &=  
+ \frac{ 2 {\mathcal F}_{i+\half}^{n+\half}  + 4 {\mathcal F}_{i-\half}^{n+\half} - 
3 \mat{A} \left( \vec{U}_{i1}^{n+\half} + \vec{U}_{i2}^{n+\half} \right)}{\Delta z}  +
    \vec{Q}\Bigl(\vec{U}^{n+1}_{i1}, \Theta_{i1}^{n+1} \Bigr), \\
\label{eq:corrector2}
    \frac{1}{c} \frac{\vec{U}^{n+1}_{i2}-\vec{U}^{n}_{i2}}{\Delta t}  &=  
- \frac{  4 {\mathcal F}^{n+\half}_{i+\half}  + 2 {\mathcal F}^{n+\half}_{i-\half} - 
3 \mat{A} \left( \vec{U}^{n+\half}_{i1}+ \vec{U}^{n+\half}_{i2} \right)}{\Delta z}  +
    \vec{Q}\Bigl(\vec{U}_{i2}^{n+1}, \Theta_{i2}^{n+1} \Bigr), \\
\label{eq:corrector3}
    {C_v} \frac{\Theta^{n+1}_{ij}-\Theta^{n}_{ij}}{\Delta t} &=  \sigma\sum_{\ell=1}^{T}{\color{black}\frac{ \Delta{\mu}}{2}}{U^{(\ell) \, n+1}_{ij(1)}}-\sigma ac\left(\Theta_{ij}^{n+1} \right)^4, 
     \quad \text{for} \, \, j=1,2.
\end{align}
The numerical flux in both the first and second stages is of the following form:
\begin{equation}
\label{eqn:discrete_numerical_flux}
{\mathcal F}_{i-\half}^{\star} = \frac{1}{2} \mat{A} \bigl( \vec{U}^{\star}_{i \, 1} + \vec{U}_{i-1 \, 2}^{\star} \bigr) - \frac{1}{2} \left|\mat{A}\right| \bigl( \vec{U}_{i \, 1}^{\star} - \vec{U}_{i-1 \, 2}^{\star} \bigr),
\end{equation}
where $\star \in \left\{ n, n+\half \right\}$.

The time-stepping scheme described above is semi-implicit in that it is explicit on the wave propagation terms and implicit on the collision terms. Since in the TRT system the collision source is a nonlinear function of the temperature, the result is that in each stage a nonlinear algebraic equation must be solved. In order to avoid this, we show in the next section how to linearize the source.

\subsection{\bf Semi-implicit time scheme: linearized version}\label{sec:dg-3}
The scheme we propose in this work for solving the 1D H$^T_N$ system \eqref{eq:hyperbolic1d}--\eqref{eq:matA1d} along with the material-energy equation \eqref{eqn:final_mat_energy} is a variant of the semi-implicit scheme described by
\eqref{eq:predictor1}--\eqref{eq:corrector3}, but with the additional feature that the source
is linearized. The proposed scheme is a H$^T_N$ extension of the scheme developed for the P$_N$ system in \cite{mcclarren1}. The details of this scheme are provided in this section.

We begin this discussion by recalling that source, $\vec{Q}$, in  
\eqref{eq:predictor1}--\eqref{eq:predictor2} and \eqref{eq:corrector1}--\eqref{eq:corrector2}
can be written as follows:
\begin{gather}
\vec{Q}\left(\vec{U}^{\star}_{ij},\Theta^{\star}_{ij} \right)= -\sigma\vec{U}^{\star}_{ij} + 
\sigma  ac\left( \Theta_{ij}^{\star} \right)^4 \vec{\tilde{e}},
\end{gather}
where $\star \in \left\{ n+\half,n+1 \right\}$ and $\vec{\tilde{e}} \in \reals^{(N+1)T}$ 
is a vector with the following components:
\begin{equation}
\label{eqn:special_e_vec}
{\tilde{e}}_{m} = \begin{cases} 1 & \text{if} \quad m = 1+(k-1)(N+1) \quad \text{for} \quad k=1,2,\ldots,T, \\
0 & \text{otherwise}.
\end{cases}
\end{equation}
These sources are clearly nonlinear in the temperature $\Theta$. In order to linearize $\vec{Q}$ we invoke the following two Taylor expansions in temperature:
\begin{align}
\left(\Theta^{n+\half}_{ij} \right)^4 &= \left(\Theta^{n}_{ij} \right)^4 + 4 \left(\Theta^{n}_{ij} \right)^3 \left( \Theta^{n+\half}_{ij} - \Theta^{n}_{ij} \right) +
{\mathcal O}\left( \left( \Theta^{n+\half}_{ij} - \Theta^{n}_{ij} \right)^2 \right), \\
\left(\Theta^{n+1}_{ij} \right)^4 &= \left(\Theta^{n}_{ij} \right)^4 + 4 \left(\Theta^{n}_{ij} \right)^3 \left( \Theta^{n+1}_{ij} - \Theta^{n}_{ij} \right) +
{\mathcal O}\left( \left( \Theta^{n+1}_{ij} - \Theta^{n}_{ij} \right)^2 \right).
\end{align}
Since our overall method is only accurate to second order, we can safely disregard the second order corrections in the above expressions. Furthermore, the linear differences in the Taylor expansions can be replaced via the update formulas \eqref{eq:predictor3} and \eqref{eq:corrector3}, respectively, yielding:
\begin{align}
\label{eqn:theta_lin1}
\left(\Theta^{n+\half}_{ij} \right)^4 &\approx \left(\Theta^{n}_{ij} \right)^4 + 4 \left(\Theta^{n}_{ij} \right)^3 \frac{\Delta t}{2 C_v}\left(\sigma\sum_{\ell=1}^{T}{\color{black}\frac{ \Delta{\mu}}{2}}{U^{(\ell) \, n+\half}_{ij(1)}}-\sigma ac\left(\Theta_{ij}^{n+\half} \right)^4 \right), \\
\label{eqn:theta_lin2}
\left(\Theta^{n+1}_{ij} \right)^4 &\approx \left(\Theta^{n}_{ij} \right)^4 + 4 \left(\Theta^{n}_{ij} \right)^3 \frac{\Delta t}{C_v}\left( \sigma\sum_{\ell=1}^{T}{\color{black}\frac{ \Delta{\mu}}{2}}{U^{(\ell) \, n+1}_{ij(1)}}-\sigma ac\left(\Theta_{ij}^{n+1} \right)^4 \right).
\end{align}
Treating the above approximations as equalities and solving \eqref{eqn:theta_lin1} and \eqref{eqn:theta_lin2} for the fourth power
of $\Theta^{n+\half}_{ij}$ and $\Theta^{n+1}_{ij}$, respectively, yields:
\begin{align}
\label{eqn:theta_four_lin1}
\left(\widetilde\Theta^{n+\half}_{ij} \right)^4 :=& \frac{\left(\Theta_{ij}^n \right)^3 \left(\frac{C_v}{\sigma} \Theta_{ij}^n +  {\color{black}2}\Delta t  \displaystyle\sum_{\ell=1}^{T}{\color{black}\frac{ \Delta{\mu}}{2}}{U^{(\ell) \, n+\half}_{ij(1)}} \right)}{\frac{C_v}{\sigma} + 2 \Delta t  a c  \left( \Theta_{ij}^n \right)^3 }, \\
\label{eqn:theta_four_lin2}
\left(\widetilde\Theta^{n+1}_{ij} \right)^4 :=& \frac{\left(\Theta_{ij}^n \right)^3 \left(\frac{C_v}{\sigma} \Theta_{ij}^n +  {\color{black}4}\Delta t  \displaystyle\sum_{\ell=1}^{T}{\color{black}\frac{ \Delta{\mu}}{2}}{U^{(\ell) \, n+1}_{ij(1)}} \right)}{\frac{C_v}{\sigma} + 4 \Delta t  a c  \left( \Theta_{ij}^n \right)^3 }.
\end{align}
Using these versions of the fourth power of the temperature successfully linearizes the source terms in \eqref{eq:predictor1}-\eqref{eq:predictor2} and \eqref{eq:corrector1}-\eqref{eq:corrector2}.

In order to complete the linearization of the source terms, we now turn our attention to
\eqref{eq:corrector3}.  In particular, we replace the fourth power of $\Theta_{ij}^{n+1}$ in 
\eqref{eq:corrector3} by
\eqref{eqn:theta_four_lin2}. After some simple algebra, we now arrive at the final semi-implicit discontinuous Galerkin scheme that is advocated in this work. The first stage is
\begin{align}
\label{eq:predictor1_linearized}
\frac{\vec{U}^{n+\half}_{i1}-\vec{U}^{n}_{i1}}{\sigma c \left({\Delta t}/{2}\right)}   &=  
 +\frac{ 2 {\mathcal F}_{i+\half}^n  + 4 {\mathcal F}_{i-\half}^n - 
3 \mat{A} \bigl( \vec{U}_{i1}^n + \vec{U}_{i2}^n \bigr)}{\sigma \Delta z}  
-\vec{U}^{n+\half}_{i1} +   ac\left( \widetilde\Theta_{i1}^{n+\half} \right)^4 \vec{\tilde{e}}, \\
\label{eq:predictor2_linearized}
    \frac{\vec{U}^{n+\half}_{i2}-\vec{U}^{n}_{i2}}{\sigma c \left({\Delta t}/{2}\right)}  &=  
- \frac{  4 {\mathcal F}^n_{i+\half}  + 2 {\mathcal F}^n_{i-\half} - 
3 \mat{A} \bigl( \vec{U}^n_{i1}+ \vec{U}^n_{i2} \bigr)}{\sigma \Delta z}  
-\vec{U}^{n+\half}_{i2} +   ac\left( \widetilde\Theta_{i2}^{n+\half} \right)^4 \vec{\tilde{e}},
    \end{align}
where $\widetilde\Theta^{n+\half}_{ij}$ is defined by \eqref{eqn:theta_four_lin1}
and $\vec{\tilde{e}}$ is defined by \eqref{eqn:special_e_vec}. 
The second stage is
\begin{align}
\label{eq:corrector1_linearized}
\begin{split}
\frac{\vec{U}^{n+1}_{i1}-\vec{U}^{n}_{i1}}{\sigma c \Delta t}   &=  
+ \frac{ 2 {\mathcal F}_{i+\half}^{n+\half}  + 4 {\mathcal F}_{i-\half}^{n+\half} - 
3 \mat{A} \left( \vec{U}_{i1}^{n+\half} + \vec{U}_{i2}^{n+\half} \right)}{\sigma \Delta z} 
-\vec{U}^{n+1}_{i1} +   ac\left( \widetilde\Theta_{i1}^{n+1} \right)^4 \vec{\tilde{e}}, 
\end{split}\\
\label{eq:corrector2_linearized}
\begin{split}
    \frac{\vec{U}^{n+1}_{i2}-\vec{U}^{n}_{i2}}{\sigma c \Delta t}  &=  
- \frac{  4 {\mathcal F}^{n+\half}_{i+\half}  + 2 {\mathcal F}^{n+\half}_{i-\half} - 
3 \mat{A} \left( \vec{U}^{n+\half}_{i1}+ \vec{U}^{n+\half}_{i2} \right)}{\sigma \Delta z} 
-\vec{U}^{n+1}_{i2} +   ac\left( \widetilde\Theta_{i2}^{n+1} \right)^4 \vec{\tilde{e}}, 
\end{split}\\
\label{eq:corrector3_linearized}
     \Theta_{ij}^{n+1}&=\Theta_{ij}^{n}+\frac{\Delta{t}\left[\displaystyle\sum_{\ell=1}^{T}{\color{black}\frac{\Delta{\mu}}{2}}{U_{ij(1)}^{(\ell)n+1}-ac\left(\Theta_{ij}^n\right)^{4}}\right]}{\frac{C_v}{\sigma}+4\Delta{t}ac\left( \Theta_{ij}^n \right)^3}, \quad \text{for} \, \, j=1,2,
\end{align}
where $\widetilde\Theta^{n+1}_{ij}$ is defined by \eqref{eqn:theta_four_lin2}
and $\vec{\tilde{e}}$ is defined by \eqref{eqn:special_e_vec}. Note that by the time we
reach  \eqref{eq:corrector3_linearized}, $\vec{U}^{n+1}_{ij}$ is already known, meaning that this step has the computational complexity of an explicit update. Again, the numerical fluxes in both the first and second stages are of the form \eqref{eqn:discrete_numerical_flux}.

\subsection{\bf Slope limiter}\label{sec:dg-4}
In order to remove unphysical oscillations from the numerical method described above, we include a slope limiter. McClarren and Lowrie \cite{mcclarren2} pointed out that the {\it double minmod} slope limiter, also known as the {\it monotonized central} slope limiter, is asymptotic-preserving for hyperbolic systems with stiff relaxation terms while the minmod limiter does not preserve the asymptotic limit due to discontinuities at the cell edge. Therefore, in this work we use the same double minmod limiter to preserve asymptotic limit. In particular, after each predictor and corrector step we compute the cell average:
\begin{equation}
    \overline{\vec{U}}_i=\frac{\vec{U}_{i1}+\vec{U}_{i2}}{2},
\end{equation}
and then modify the original nodal values as
\begin{equation}
    \vec{U}_{i1}:=\overline{\vec{U}}_i-\frac{\vec{s}_i}{2} \qquad \text{and} \qquad \vec{U}_{i2}:= \overline{\vec{U}}_i+\frac{\vec{s}_i}{2},
\end{equation}
where
\begin{equation}
    s_{i (\ell)}=\text{mm}\Bigl( \, U_{i2 (\ell)}-U_{i1 (\ell)},\,\,\alpha \left( \overline{{U}}_{i(\ell)}-\overline{U}_{i-1 (\ell)} \right),\,\,\alpha \left(\overline{U}_{i+1 (\ell)}-\overline{{U}}_{i(\ell)} \right)\Bigr),
\end{equation}
where $\alpha\in [0,2]$, $i=1,\cdots,N_z$, $l=1,\cdots,T(N+1)$, and the minmod function is defined as follows:
\begin{equation}
    \text{mm}(a,b,c):= 
\begin{cases}
    \text{sign}(a)\,\text{min}\left(|a|,|b|,|c|\right),& \text{if  } \text{sign}(a)=\text{sign}(b)=\text{sign}(c),\\
    0,& \text{otherwise}.
\end{cases}
\end{equation}
As explained in \cite{mcclarren1}, $\alpha=0$ is the first-order upwind or Godunov
scheme, $\alpha=1$ is the minmod limiter, and $\alpha=2$ is the monotonized central (MC) or double minmod limiter. We use $\alpha=2$ in all our numerical tests.
\subsection{\bf Boundary conditions}\label{sec:dg-5}
To complete the numerical methods section, we briefly explain how boundary conditions are implemented.
The three types of boundary conditions considered in this work in various examples are reflective,
Dirichlet, and vaccuum conditions.
In all cases we prescribe the intensities on the left and right boundaries via the following expressions: 
\begin{equation}
    I^L(t,\mu)= 
\begin{cases}
    I_{\text{out}}(t,z_L,\mu) & \text{if  } \mu>0,\\
    I(t, z_L, \mu) & \text{if  } \mu<0,
\end{cases} \quad 
    I^R(t,\mu)= 
\begin{cases}
    I(t,z_R,\mu) & \text{if  } \mu>0,\\
    I_{\text{out}}(t,z_R,\mu) & \text{if  } \mu<0,
\end{cases}
\end{equation}
respectively, where
\begin{equation}
    I_{\text{out}}(t,z,\mu)= 
\begin{cases}
    I(t, z, -\mu) & \text{if reflective BC},\\
    I(t, z,  \mu) & \text{if Dirichlet BC},\\
    0 & \text{if vacuum BC}.
\end{cases}
\end{equation}
Note that $z_L$ and $z_R$ denote the left and right boundaries, respectively.
{\color{black}
\subsection{\bf Asymptotic analysis}\label{sec:dg-6}
The grey transport equation \eqref{eq:frequencyfree_grey1d} 
and material-energy equation \eqref{eq:material1d} reduce to the so-called
{\it equilibrium diffusion limit} under a certain rescaling of the underlying parameters. The rescaled parameters are as follows:
\begin{equation}
\label{eqn:ap_rescaling}
\sigma \rightarrow \frac{\sigma}{\varepsilon}, \quad
c \rightarrow \frac{c}{\varepsilon}, \quad
C_v \rightarrow \varepsilon \, C_v, \quad
a \rightarrow \varepsilon \, a,
\end{equation}
where $\varepsilon>0$, which results in the following rescaled transport and material-energy equation (where we have set $s\equiv0$):
\begin{gather}\label{eq:frequencyfree_grey1d_rescaled}
    \frac{\varepsilon^2}{c}\frac{\uppartial I}{\uppartial t} + \varepsilon \, \mu \frac{\uppartial I}{\uppartial z}+ \sigma I=\frac{1}{2} \sigma {ac{\theta}^4}, \\
\label{eq:material1d_rescaled}
    \varepsilon^2 C_v \frac{\uppartial \theta}{\uppartial t}= \sigma \left( \int_{-1}^{1} I(t,z,\mu) \, \diff\mu -a c {\theta}^4 \right).
\end{gather}
As shown in Larsen et al. \cite{article:Larsen1983}, the highly collisional limit, $\varepsilon \rightarrow 0^+$, results in the following nonlinear diffusion equation:
\begin{equation}\label{eq:equilibrium_diffusion}
     \frac{\uppartial}{\uppartial t} \left[ C_v \theta^{(0)} + a\left(\theta^{(0)}\right)^4 \right] = \frac{\uppartial}{\uppartial z}\left[ \frac{ac}{3\sigma}\frac{\uppartial}{\uppartial z}\left(\theta^{(0)}\right)^4 \right],
\end{equation}
where $\theta^{(0)}$ refers to the leading order term in a power series expansion of the temperature $\theta$ in $\varepsilon$.

A numerical method for system \eqref{eq:frequencyfree_grey1d_rescaled}--\eqref{eq:material1d_rescaled} is called {\it asymptotic-preserving} if
for fixed discretization parameters ($\Delta t$, $\Delta z$, and $\Delta \mu$),
the numerical method in the limit $\varepsilon \rightarrow 0^+$ reduces to a 
consistent and stable discretization of \eqref{eq:equilibrium_diffusion}.

\begin{proposition}[Asymptotic preserving (AP) property in the
equilibrium diffusion limit]
The numerical method given by \eqref{eq:predictor1_linearized}--\eqref{eq:corrector3_linearized}
with \eqref{eqn:discrete_numerical_flux}, \eqref{eqn:special_e_vec}, \eqref{eqn:theta_four_lin1}, and \eqref{eqn:theta_four_lin2}, after
rescaling \eqref{eqn:ap_rescaling}, produces the following
consistent and stable discretization of \eqref{eq:equilibrium_diffusion} in the limit as $\varepsilon \rightarrow 0^+$ when the discretization parameters ($\Delta t$, $\Delta z$, and $\Delta \mu$) are held constant:
%
%
\begin{equation}\label{eq:correct_diffusion_discretization}
\begin{gathered}
    \frac{C_v\check{\Theta}^{[0]n+1}_{i2}+a\left(\check{\Theta}^{[0]n+1}_{i2}\right)^4-C_v\check{\Theta}^{[0]n}_{i2}-a\left(\check{\Theta}^{[0]n}_{i2}\right)^4}{\Delta{t}} \\ =ac\frac{\left(\Theta^{[0]n}_{i+1\,2}\right)^4-2\left(\Theta^{[0]n}_{i2}\right)^4+\left(\Theta^{[0]n}_{i1}\right)^4}{3\sigma\Delta{z}^2},
    \end{gathered}
\end{equation}
where the superscript $[s]$ represents the $s^{\text{th}}$ term of the expansion in $\varepsilon$ and the weighted average $\check{(\cdot)}$ is defined as follows:
\begin{equation}
\begin{gathered}
\check{(\cdot)}_{i2}:=\frac{1}{2}\left[\left(\bar{(\cdot)}_{i+1(1)}-\frac{1}{3}\hat{(\cdot)}_{i+1(1)}\right)+\left(\bar{(\cdot)}_{i(1)}+\frac{1}{3}\hat{(\cdot)}_{i(1)}\right)\right], \\
    \bar{(\cdot)}_i:=\frac{1}{2}\left[(\cdot)_{i1}+(\cdot)_{i2}\right],\quad
    \hat{(\cdot)}_i:=\frac{1}{2}\left[(\cdot)_{i2}-(\cdot)_{i1}\right].
    \end{gathered}
\end{equation}
We note that one obtains the same limiting numerical scheme independent of the number of velocity bands, $T$, and that this limiting numerical scheme is the same as the one achieved by the P$_N$ approximation \cite{mcclarren1}.
\end{proposition}

\begin{proof}
The proof of the above claim follows exactly the same steps as the one given in
McClarren et al. \cite{mcclarren1} for the P$_N$ system with only one minor modification: for the proposed method considered in this work, we need to sum 
update equations \eqref{eq:predictor1_linearized}--\eqref{eq:corrector2_linearized} 
over all velocity bands ($\frac{\Delta \mu}{2} \sum_{\ell=1}^T$)
before applying the perturbation series analysis.
\qed
\end{proof}
}
.pdf

\section{Numerical results}\label{sec:numerical_result}
In this section, numerical results for six standard benchmark problems for the TRT system are provided, including examples in the optically thin and thick regimes:
(1) bilateral inflow (\S\ref{sec:bilater_inflow_problem}), (2) streaming in a vaccuum (\S\ref{sec:vacuum_problem}), (3) Su-Olson problem (\S\ref{sec:su_olson_problem}), (4) diffusive Marshak wave (\S\ref{sec:diffusive_Marshak_problem}), (5) Marshak wave in thin medium (\S\ref{sec:thin_Marshak_problem}), 
and (6) smooth Marshak wave problem (\S\ref{sec:smooth_Marshak_problem}).
{\color{black}Note that the problems (1) and (2) are free streaming.}
The material-energy coupling
equation \eqref{eqn:final_mat_energy} is only required for in the examples described in \S \ref{sec:su_olson_problem}--\S\ref{sec:smooth_Marshak_problem}.

Unless otherwise stated, we choose a CFL condition for all simulations as follows:
\begin{equation}
    \text{CFL}:=\frac{c\,\rho({\mat{A}})\,\Delta{t}}{\Delta{x}}\leq 0.3,
\end{equation}
where $\rho({\mat{A}}) \approx 1$ is the spectral radius of a matrix $\mat{A}$ as
defined in equation \eqref{eqn:spectral_radius}. For numerical examples in which the diffusion dominates, for example problems like the diffusive Marshak wave problem, we are able to achieve very relaxed CFL number between 2 and 3.

\subsection{\bf Bilateral inflow}\label{sec:bilater_inflow_problem}
This problem is used to test the H$^T_N$ scheme without  the opacity $\sigma$ and the external source $s$; this shows how well the hybrid discrete approximation of the free-streaming kinetic operator and its corresponding numerical discretization captures the correct wave speeds and resolves discontinuities. In this setting, equation \eqref{eq:frequencyfree_grey1d} reduces to the following:
\begin{equation}
\frac{1}{c}\,\frac{\uppartial I}{\uppartial t}+\mu\,\frac{\uppartial I}{\uppartial z}=0,
\end{equation}
for which the analytic solution can easily be computed from the method of characteristics (e.g., see  \cite{fan}):
\begin{equation}
\label{eqn:bilateral_exact1}
I(t,z,\mu)=I^0(z-c\mu t,\mu).
\end{equation}
We choose the initial condition as
\begin{equation}
\label{eqn:bilateral_exact2}
    I^{0}(z,\mu)= 
\begin{cases}
    ac\delta(\mu-1) & \text{if } \quad z\leq0.2,\\
    0 & \text{if } \quad 0.2<z<0.8,\\
    0.5 ac & \text{if } \quad z>0.8,
\end{cases}
\end{equation}
where $\delta(\mu-1)$ is a Dirac delta centered at $\mu=1$, $c=3\times10^{10}$ cm s$^{-1}$ is the speed of light, and $a=1.372\times10^{14}$ ergs cm$^{-3}$ keV$^{-4}$ is the radiation constant.
The exact angular moment of the radiation intensity, $E(t,z)$, is given by the following if $0<ct<0.3$:
\begin{equation}
a^{-1} E(t,z)= 
\begin{cases}
    1& \text{if }\quad z\leq0.2+ct,\\
    0& \text{if } \quad 0.2+ct < z \le 0.8 -ct,\\
    \left({z-0.8+ct}\right)/\left({2ct}\right) & \text{if } \quad 0.8-ct\le z \le 0.8+ct, \\
    1 & \text{if } \quad z\ge0.8+ct.
\end{cases}
\end{equation}
We run the code until $c t_{\text{end}}=0.1$, in the physical domain $z \in [0,1]$ with inflow/outflow boundary conditions based on the exact solution \eqref{eqn:bilateral_exact1}--\eqref{eqn:bilateral_exact2}
. 
This example is challenging for moment closure methods such as P$_N$ and H$^T_N$ due to the fact that there is a delta function in velocity, as well as discontinuities in both $z$ and $\mu$. Furthermore, in this example there are no collisions to help smooth out the solution.

The scaled angular moment of the radiation intensity, $E(t,z)/a$, for various P$_N$ and H$^T_N$ approximations are shown in \figref{fig:HT_bilateral_inflow}. Each panel in \figref{fig:HT_bilateral_inflow} shows
solutions with models that have the same DOFs: 
(a) 4 moments: P$_3$ and H$^2_1$,
(b) 6 moments: P$_5$, H$^3_1$, and H$^2_2$,
(c) 8 moments: P$_7$, H$^4_1$, and H$^2_3$, and 
(d) 24 moments: P$_{23}$, H$^{12}_1$, and H$^4_5$. In each case, the H$^T_N$ model with $T>1$ gives less oscillation than the P$_N$ model with the same DOFs.

In all the simulations
shown in  \figref{fig:HT_bilateral_inflow} we have used the double minmod limiter to control unphysical oscillations and to remove negative density values; without the double minmod limiters active, both P$_N$ and H$^T_N$ solutions suffer from negative densities near the discontinuity.  In order to show how the double minmod limiter affects the solution we also provide \figref{fig:HT_bilateral_inflow_limiter}, in which we show a direct comparison of the unlimited and limited H$^4_3$ solutions.


\begin{figure}[!ht] 
\centering
\captionsetup[subfigure]{position=top,aboveskip=1pt}
 \subcaptionbox{Bilateral inflow: P$_3$ vs H$^T_N$}{\includegraphics[width=57mm]{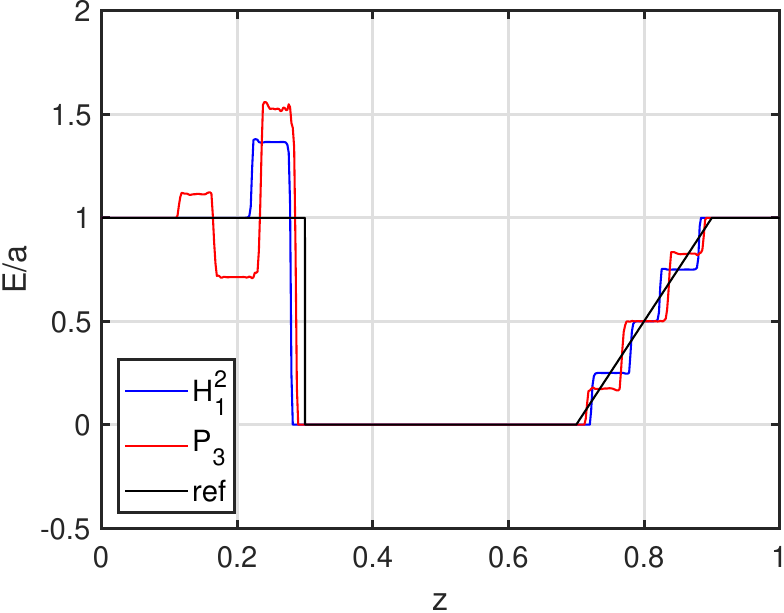}}\hfill
 \subcaptionbox{Bilateral inflow: P$_5$ vs H$^T_N$}{\includegraphics[width=57mm]{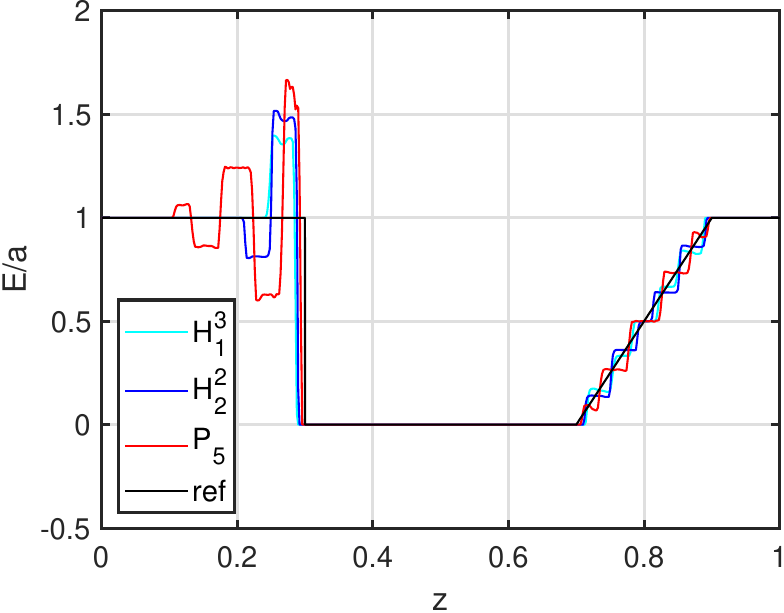}}\vspace{3mm}
 \subcaptionbox{Bilateral inflow: P$_7$ vs H$^T_N$}{\includegraphics[width=57mm]{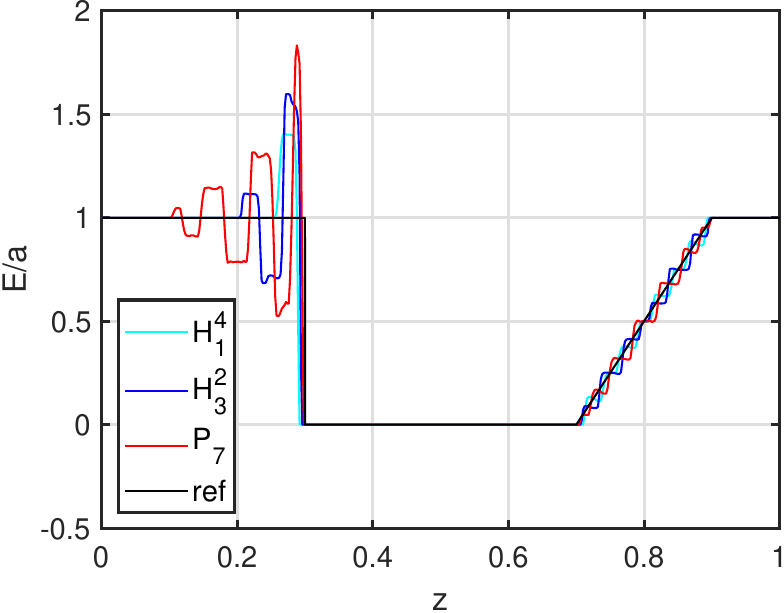}}\hfill
 \subcaptionbox{Bilateral inflow: P$_{23}$ vs H$^T_N$}{\includegraphics[width=57mm]{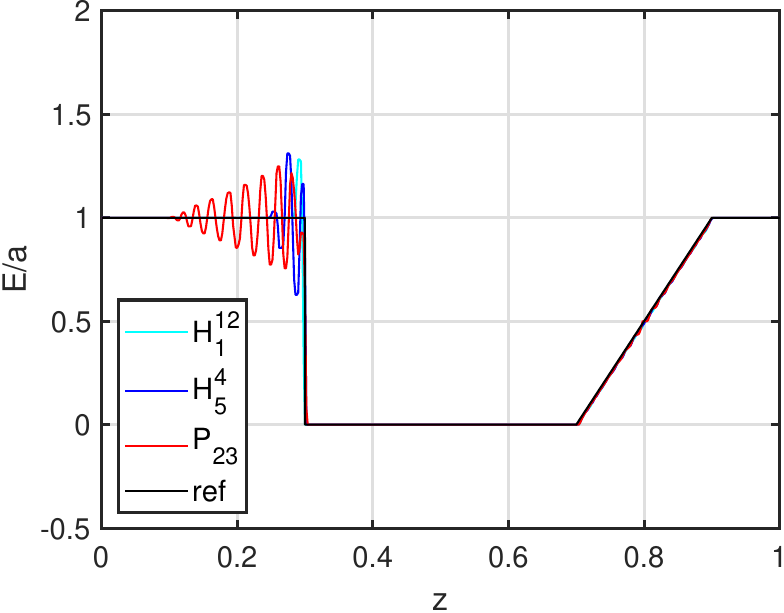}}
  \caption{(\S\ref{sec:bilater_inflow_problem}: Bilateral inflow)  Comparisons of P$_N$ and H$^T_N$ solutions for the bilateral inflow problem at $ct=0.1$, $N_z=500$, with $\text{CFL}=0.3$.}
 \label{fig:HT_bilateral_inflow}
 \end{figure}

\begin{figure}[!ht] 
  \centering
\includegraphics[width=70mm]{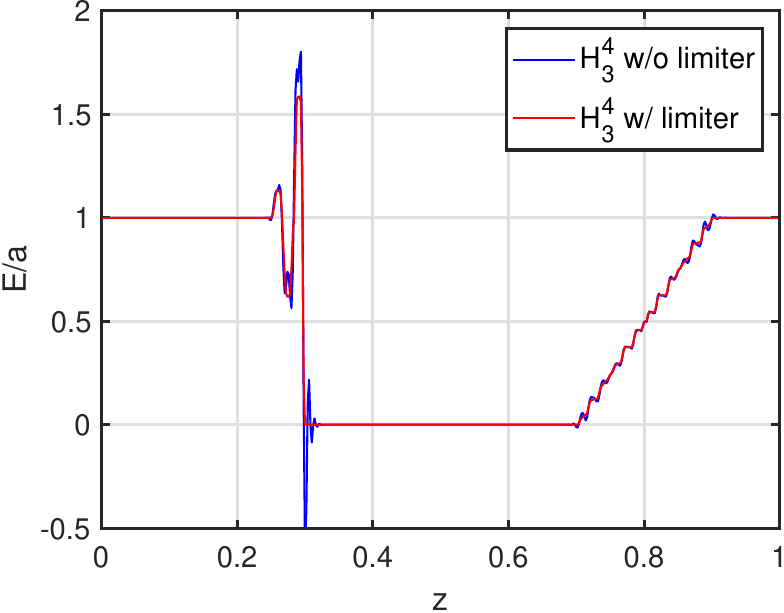}
 \caption{(\S\ref{sec:bilater_inflow_problem}: Bilateral inflow)  H$^4_3$ solution with and without the double minmod limiter for the bilateral inflow problem at $c t=0.1$, $N_z=500$, with $\text{CFL}=0.1$.}
 \label{fig:HT_bilateral_inflow_limiter}
 \end{figure}
 
\subsection{\bf Free streaming in a vacuum}\label{sec:vacuum_problem}
In this section, we  test  our  scheme  on the propagation of photons in a vacuum.
We choose a zero initial condition, $I^{0}(z,\mu)=0$, on the computational domain $z\in[0,1]$ with
 the following Dirichlet boundary conditions:
\begin{equation}
   I\left(t,z=0,\mu\right)= ac \qquad \text{and} \qquad  I\left(t,z=1,\mu\right)=0.
\end{equation}

The analytic solution for $t>0$ with the given initial and boundary conditions can be obtained by the method of characteristics as in the previous bilateral problem:
\begin{equation}
I\left(t,z,\mu \right) = 
\begin{cases}
ac & \text{if} \quad  z/(ct) < \mu \le 1, \\ 
0 & \text{if} \quad -1 \le \mu \le z/(ct).
\end{cases}
\end{equation}
The exact solution for the angular moment for $t>0$ in this case is
\begin{equation}
a^{-1} E(t,z)= \int_{z/(ct)}^{1} \diff\mu = 1-z/(ct).
\end{equation}

In \figref{fig:HTN_vacuum_conv_rate} and \tabref{tab:error_table_dof_16} we compute the absolute $L_2$ error in the scaled angular moments, $E(t,z)/a$, for various H$_N^T$ approximations:
\begin{align}
L_2\text{  error} \, :=& \, \left[\frac{1}{a N_z}\sum_{i=1}^{N_z}\Bigl( E^T_N\left(t_{\text{end}}, z_i\right)
- E_{\text{exact}}\left(t_{\text{end}}, z_i\right) \Bigr)^{2}\right]^{1/2}, \\
L_{\infty}\text{  error} \, :=& \,  \max_{1\le i \le N_z} \, \biggl| \frac{1}{a} \Bigl( E^T_N\left(t_{\text{end}}, z_i\right)
- E_{\text{exact}}\left(t_{\text{end}}, z_i\right) \Bigr) \biggr|,
\end{align}
where $E^T_N$ is the angular moment solution calculated using the H$^T_N$ approximation.
In particular, we show in \figref{fig:vacuum_conv_a} the $L_2$ convergence of H$_N^T$ with increasing $N$ 
and in \figref{fig:vacuum_conv_b} the $L_2$ convergence of H$_N^T$ with increasing $T$. \figref{fig:vacuum_conv_a} shows rapid convergence in terms of $N$ for the H$^2_N$ and H$^3_N$ solutions, however P$_N$=(H$^1_N$) shows a much slower convergence rate. Meanwhile, \figref{fig:vacuum_conv_b} shows rapid convergence with a much steeper slope than \figref{fig:vacuum_conv_a}, which suggests that increasing $T$ is a better way to achieve the desired accuracy than increasing $N$ when a discontinuity exists in the
underlying intensity $I(t,z,\mu)$. This observation agrees with the numerical values shown in \tabref{tab:error_table_dof_16}, where the $L_{2}$ and $L_{\infty}$ errors are shown
for various methods that all have the same degrees of freedom ($\text{DOF}=16$).

Additionally, we study the profile of $E(t,z)$ for various methods in \figref{fig:HTN_vacuum}.
Each H$^T_N$ solution in \figref{fig:HTN_vacuum} shows the propagation of multiple waves depending on the number of distinct eigenvalues of the matrix $\mat{A}$ defined in \eqref{eq:matA1d_sub} and \eqref{eq:matA1d}. 
In particular, in \figref{fig:vacuum_a} and \figref{fig:vacuum_b} we compare H$^1_N$ and H$^2_N$ solutions  with various $N$, respectively. Analogously, in \figref{fig:vacuum_c} and \ref{fig:vacuum_d} we compare various H$^T_1$ and H$^T_2$ solutions with different $T$, respectively. Finally, in   \figref{fig:vacuum_e} through \ref{fig:vacuum_h} we plot in each panel a different H$^T_N$ method with $\text{DOF}=16$:
(e) H$_{15}^1$, (f) H$_7^2$, (f) H$_3^4$, and
(g) H$_1^8$. Again, we conclude from these simulations that in the case when the intensity is discontinuous, increasing $T$ is more effective than increasing $N$.
 {\color{black}We also demonstrate in these panels that when $T$ is odd and $N$ is even (i.e., $\mat{A}$ has a zero eigenvalue -- see Remark \ref{sec:remark_eigenvalue}), the solutions show incorrect boundary values:
 \figref{fig:vacuum_a} (H$^1_2$ and H$^1_4$)  and  in \figref{fig:vacuum_d} (H$^1_2$ and H$^3_2$).}



\begin{figure}[htbp]
\centering
\captionsetup[subfigure]{position=top,aboveskip=1pt}
 \subcaptionbox{H$^{1}_{N}$(=P$_N$) vs H$^{2}_{N}$ vs H$^{3}_{N}$ \label{fig:vacuum_conv_a}}{\includegraphics[width=57mm]{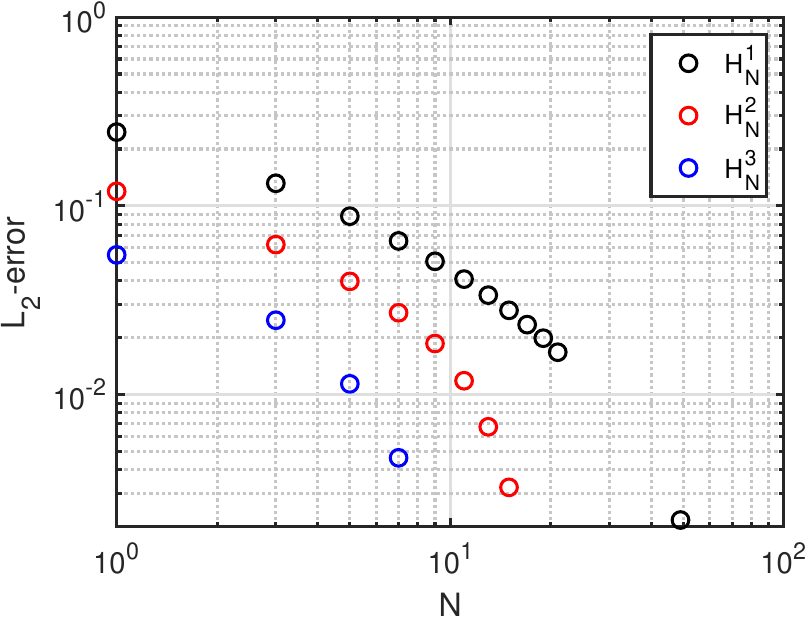}}\hfill
  \subcaptionbox{H$^{T}_{1}$ vs H$^{T}_{3}$ vs H$^{T}_{5}$\label{fig:vacuum_conv_b}}{\includegraphics[width=57mm]{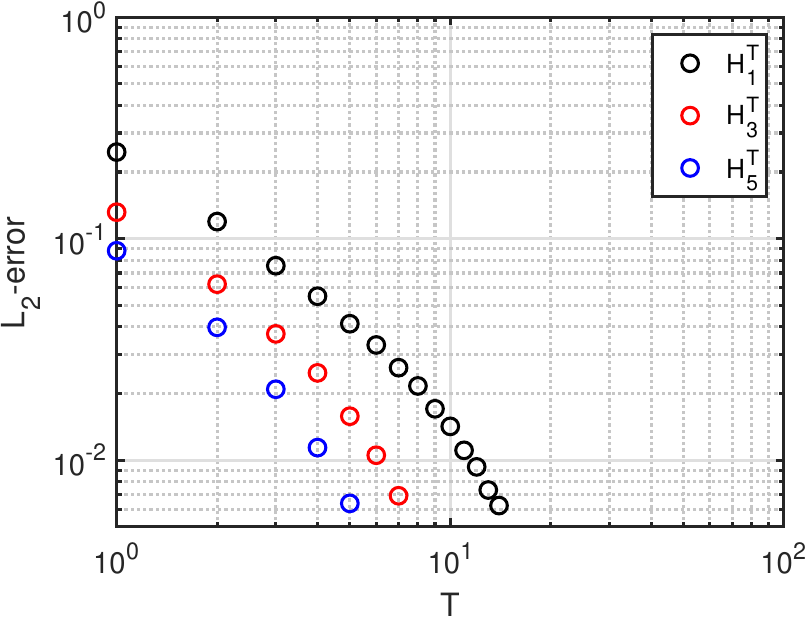}}
  \caption{(\S\ref{sec:vacuum_problem}: Streaming in a vacuum) Convergence study of H$^T_N$ solutions for the vacuum propagation problem at $t=2.5\times10^{-11}$s with $N_z=100$ and $\text{CFL}=0.3$ with respect to (a) $N$ and (b) $T$.}
 \label{fig:HTN_vacuum_conv_rate}
\end{figure}
\begin{figure}[htbp]
 \centering
 \captionsetup[subfigure]{position=top,aboveskip=1pt}
 \subcaptionbox{Vacuum propagation: H$^{1}_{N}$\label{fig:vacuum_a}}{\includegraphics[width=49mm]{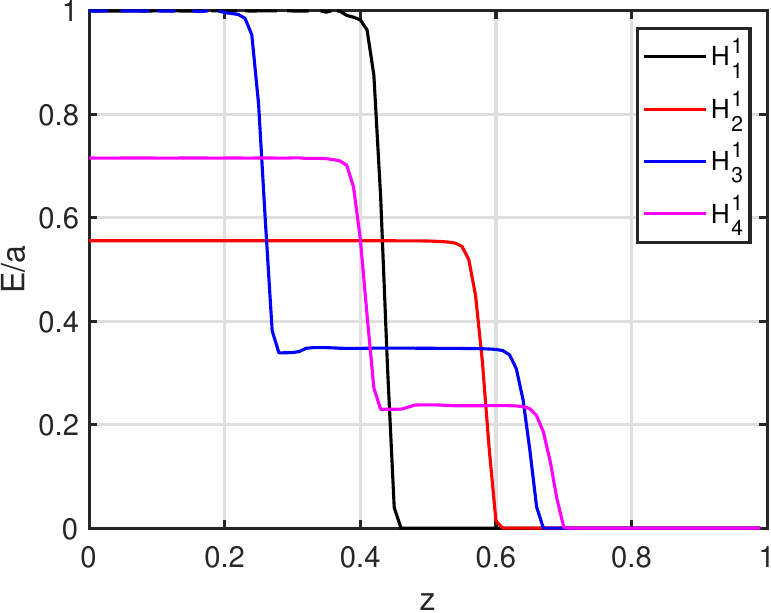}}\hfill
  \subcaptionbox{Vacuum propagation: H$^{2}_{N}$\label{fig:vacuum_b}}{\includegraphics[width=49mm]{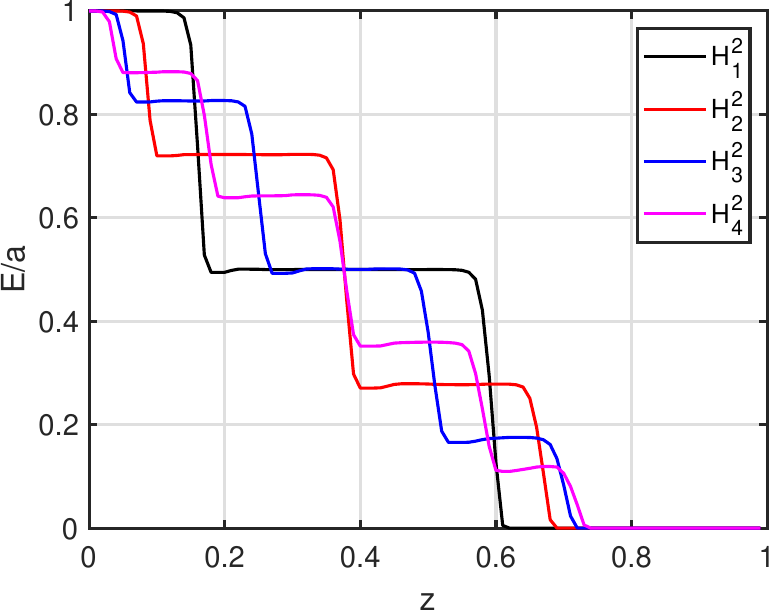}}\vspace{3mm}
   \subcaptionbox{Vacuum propagation: H$^{T}_{1}$\label{fig:vacuum_c}}{\includegraphics[width=49mm]{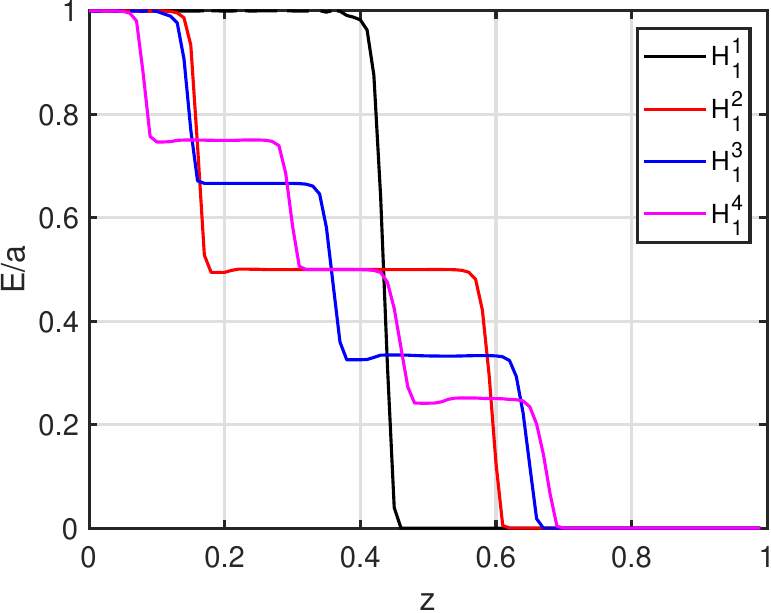}}\hfill
 \subcaptionbox{Vacuum propagation: H$^{T}_{2}$\label{fig:vacuum_d}}{\includegraphics[width=49mm]{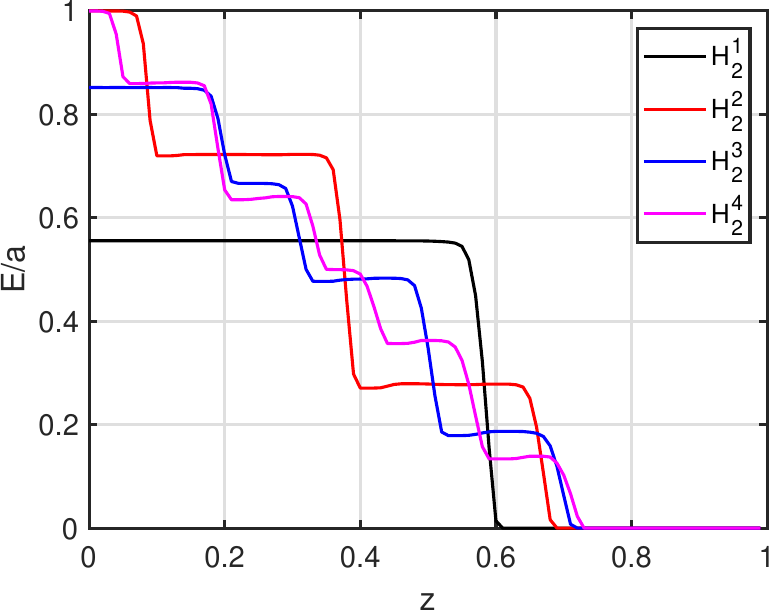}}\vspace{3mm}
 \subcaptionbox{Vacuum propagation: H$^{1}_{15}$\label{fig:vacuum_e}}{\includegraphics[width=49mm]{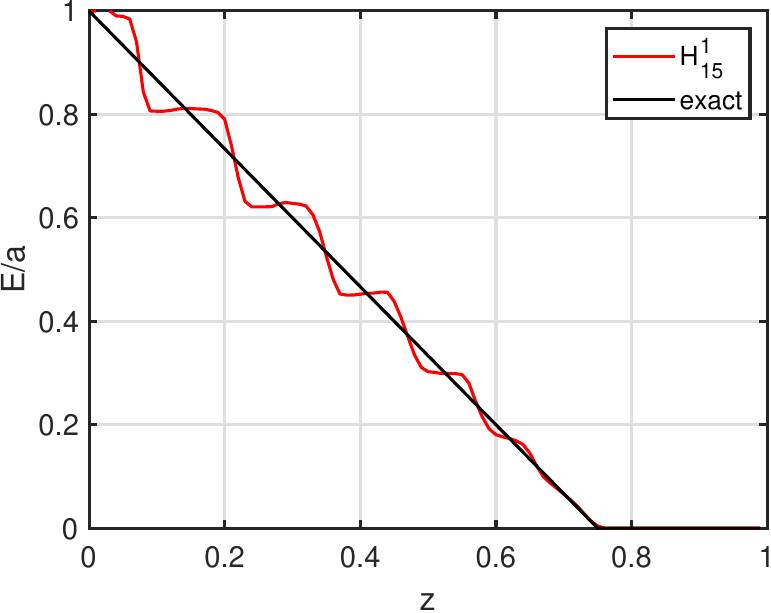}}\hfill
 \subcaptionbox{Vacuum propagation: H$^{2}_{7}$\label{fig:vacuum_f}}{\includegraphics[width=49mm]{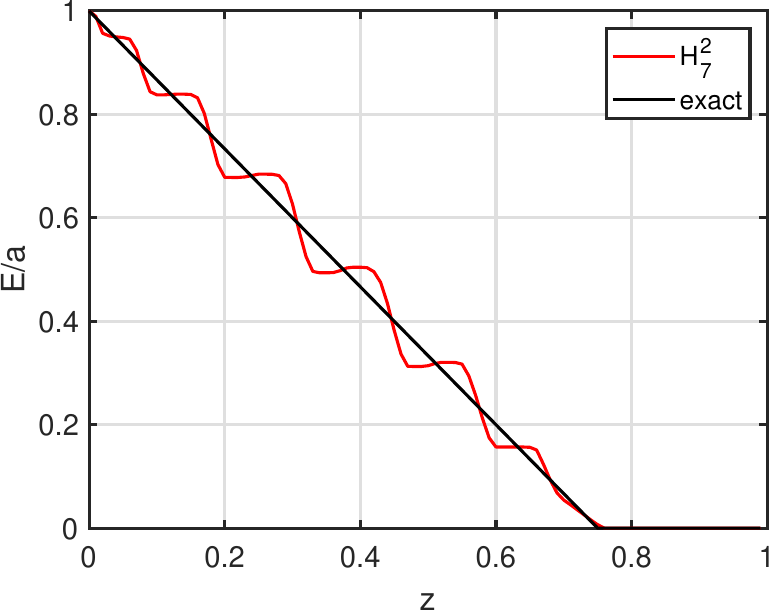}}\vspace{3mm}
  \subcaptionbox{Vacuum propagation: H$^{4}_{3}$\label{fig:vacuum_g}}{\includegraphics[width=49mm]{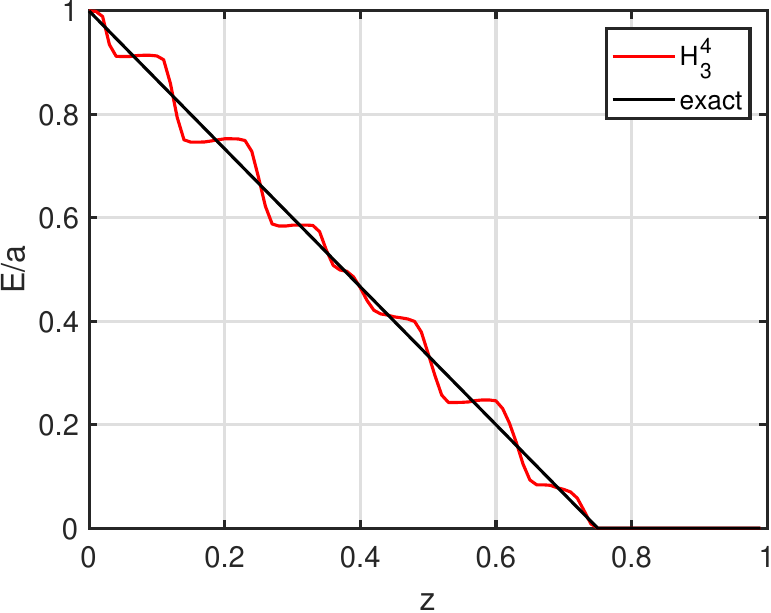}}\hfill
   \subcaptionbox{Vacuum propagation: H$^{8}_{1}$\label{fig:vacuum_h}}{\includegraphics[width=49mm]{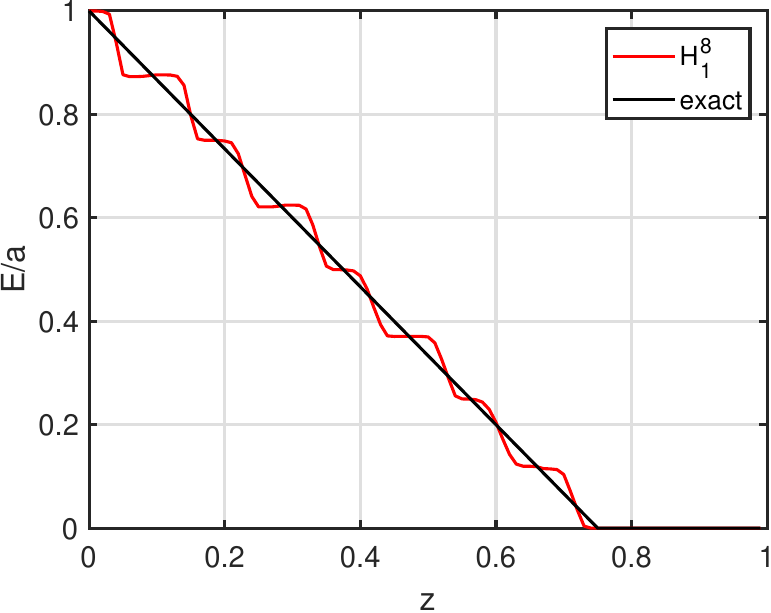}}
 \caption{(\S\ref{sec:vacuum_problem}: Streaming in a vacuum) The H$^T_N$ solutions for the vacuum propagation problem at $t=2.5\times10^{-11}$ with $N_z=100$ and $\text{CFL}=0.3$.}
 \label{fig:HTN_vacuum}
\end{figure}
\begin{table}[htbp]
    \centering
    \begin{tabular}{|c|c|c|c|c|}
    \midrule
    error&H$^8_1$&H$^4_3$&H$^2_7$& $\text{H}^1_{15} \left( =\text{P}_{15} \right)$ \\
    \midrule[1pt]
       $L_{2}$&2.1623$\times10^{-2}$ & 2.4764$\times10^{-2}$& 2.7112$\times10^{-2}$&   2.7934$\times10^{-2}$   \\
       \midrule
       $L_{\infty}$&5.6687$\times10^{-2}$& 6.2532$\times10^{-2}$ & 6.3653$\times10^{-2}$ &   7.3359$\times10^{-2}$\\
       \midrule
    \end{tabular}
    \caption{(\S\ref{sec:vacuum_problem}: Streaming in a vacuum) $L_{2}$ and $L_{\infty}$ errors for the vacuum propagation problem for various methods that all have the same degrees of freedom ($\text{DOF}=16$).}
    \label{tab:error_table_dof_16}
\end{table}
\subsection{\bf Su-Olson problem}\label{sec:su_olson_problem}
The next benchmark problem we solve is the non-equilibrium Su-Olson problem with  material coupling \cite{su-olson}. In order to compare the numerical solutions to the semi-analytic solution, we follow the conditions used in \cite{su-olson}, i.e., $\sigma=1$, and the external source term is given by
\begin{equation}
    S(t,z)= 
\begin{cases}
    ac & \text{if  } \quad -0.5\leq{z}\leq 0.5, \\
    0              & \text{otherwise.}
\end{cases}
\end{equation}
We use the computational domain $z \in \bigl[-ct_\text{end}-1, \, ct_\text{end}+1 \bigr]$ with periodic boundary conditions. For this problem, both $\text{P}_N = \text{H}^1_N$ and H$^T_N$ perform very well even with small $N$. 

\figref{fig:HTN_su_olson} shows the results from three different simulations with $ct_{\text{end}}$=1.0, 3.16, and 10.0, respectively. The solutions to these three cases are shown on a linear scale in \figref{fig:su_olson_a} (P$_3$ and H$_1^2$) and \ref{fig:su_olson_b} (P$_5$ and H$_2^2$),
and on a log-log scale in \figref{fig:su_olson_c} (P$_3$ and H$_1^2$) and \ref{fig:su_olson_d} (P$_5$ and H$_2^2$). The reference solutions are obtained from \cite{su-olson}.

\bigskip

\begin{figure}[htbp]
 \centering
 \captionsetup[subfigure]{position=top,aboveskip=1pt}
 \subcaptionbox{Su-Olson: P$_3$ vs H$^{2}_{1}$\label{fig:su_olson_a}}{\includegraphics[width=56mm]{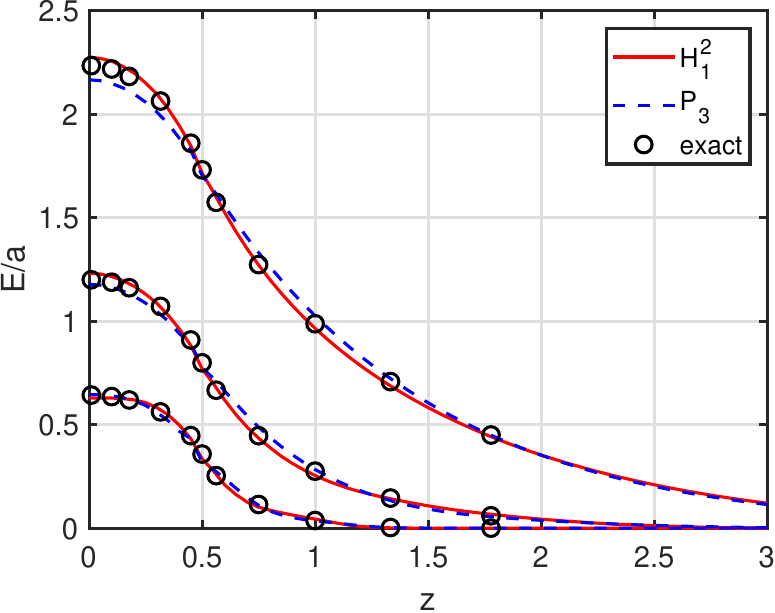}} \hspace{3mm}
  \subcaptionbox{Su-Olson: P$_5$ vs H$^{2}_{2}$\label{fig:su_olson_b}}{\includegraphics[width=56mm]{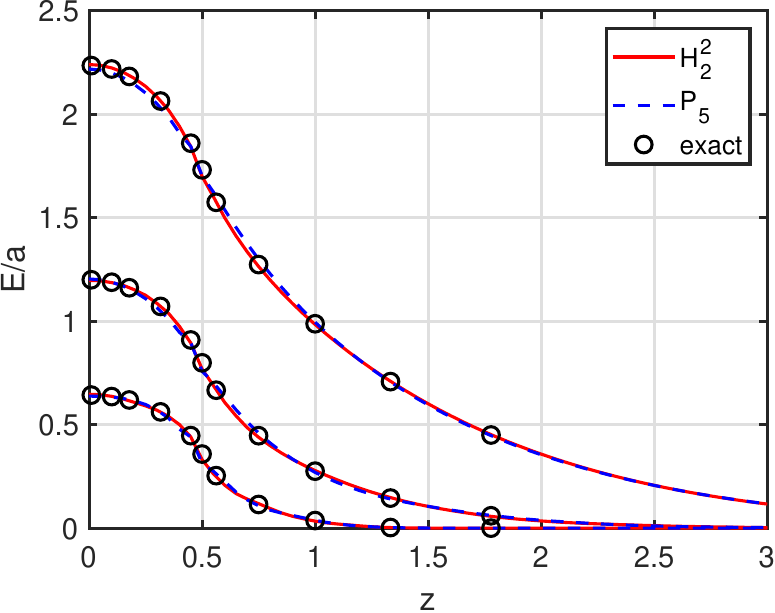}}\vspace{3mm}
 \subcaptionbox{Su-Olson: P$_3$ vs H$^{2}_{1}$ (log-log)\label{fig:su_olson_c}}{\includegraphics[width=56mm]{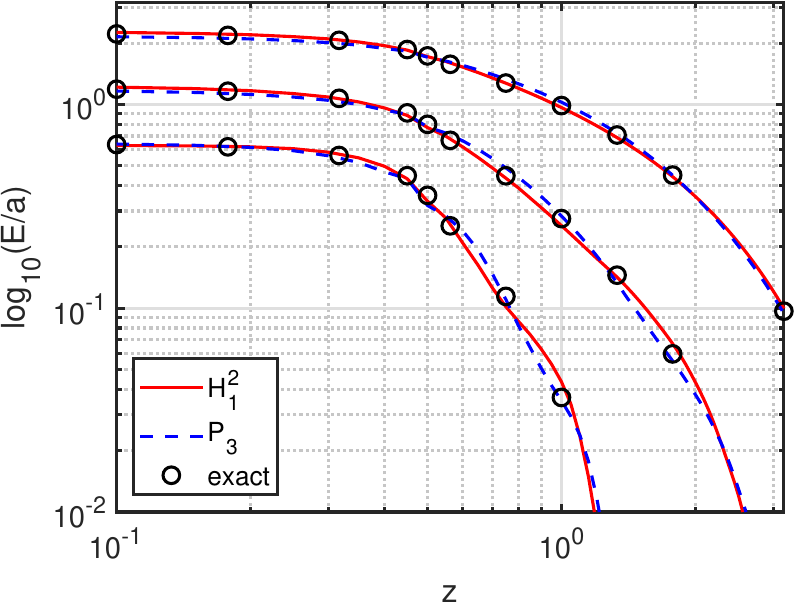}} \hspace{3mm}
\subcaptionbox{Su-Olson: P$_5$ vs H$^{2}_{2}$ (log-log)\label{fig:su_olson_d}}{\includegraphics[width=56mm]{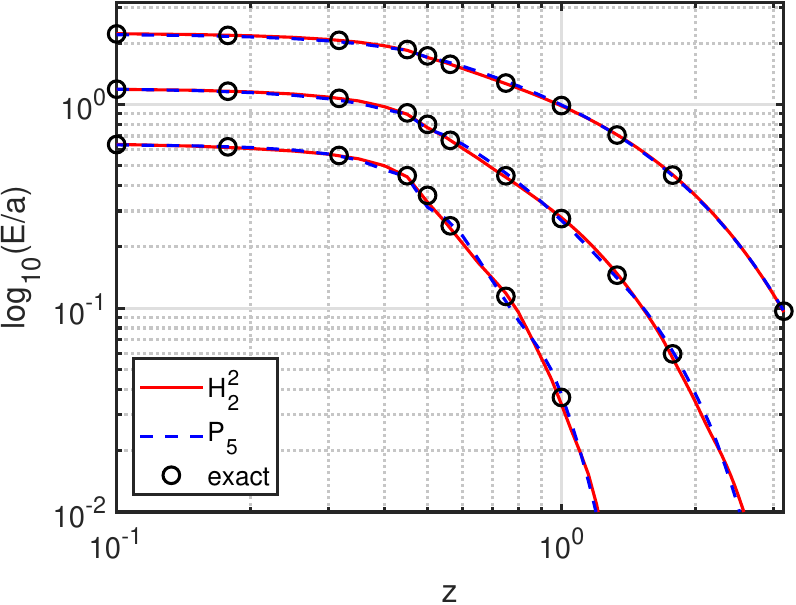}}
 \caption{(\S\ref{sec:su_olson_problem}: Su-Olson problem) Three different simulations using H$^T_N$ for the Su-Olson problem with $N_z=100$, $\text{CFL}=0.3$, and $ct_\text{end}=1.0,\, 3.16,\, 10$, respectively. Panels (a) and (b) show P$_3$, H$_1^2$,
 P$_5$, and H$_2^2$ on a linear scale, while panels (c) and (d) show those same solutions on a log-log scale.}
 \label{fig:HTN_su_olson}
\end{figure}

\subsection{\bf Diffusive Marshak-wave problem}\label{sec:diffusive_Marshak_problem}
In this section, a diffusive Marshak-wave problem is investigated; this problem is a standard test
case in the literature \cite{evans,li,mcclarren1,vikas}. This problem consists of a semi-infinite medium of material with the opacity
\begin{equation}
\sigma=300/{\theta}^3.
\end{equation}
The computational domain is $z \in [0.0, 0.6]$ and the initial conditions are given by the following with ${\theta}_0=10^{-4}$ keV:
\begin{equation}
I\left(t=0,z,\mu\right) = \frac{1}{2}ac {{\theta}_0}^4 \qquad \text{and} \qquad
{\theta}\left(t=0,z\right)={\theta}_0.
\end{equation}

We use the isotropic incoming boundary condition corresponding to a 1 keV temperature source on the left boundary $z_L=0$, and no incoming radiation on the right boundary at $z_R=0.6$:
\begin{equation}
    I\left(t,z_L,\mu>0\right) = \frac{1}{2}ac \qquad \text{and} \qquad
    I\left(t,z_R,\mu<0\right) = 0.
\end{equation}
We compute the material temperature, $\theta(t,z)$ at various times: $t=10^{-8}$s,\ $5\times10^{-8}$s,\ and $10^{-7}$s. In our numerical tests for this problem all H$^T_N$ solutions look  similar, thus, we only present H$^2_2$ solutions in \figref{fig:HTN_diffusive_marshak}. In this test, we use $N_z=16$ with the mesh size $\Delta{z}=0.0375$. Despite the fact that we use a coarse mesh, the H$^T_N$ solution is able to adequately capture the wave propagation front.  Due to the fact that  diffusion dominates, we are able to achieve stable results with a relaxed Courant number: $\text{CFL}=1.7$. The reference solution shown in this plot is the semi-analytic equilibrium-diffusion solution (e.g., see \cite{mcclarren1}).


\begin{figure}[htbp]
 \centering
\includegraphics[width=70mm]{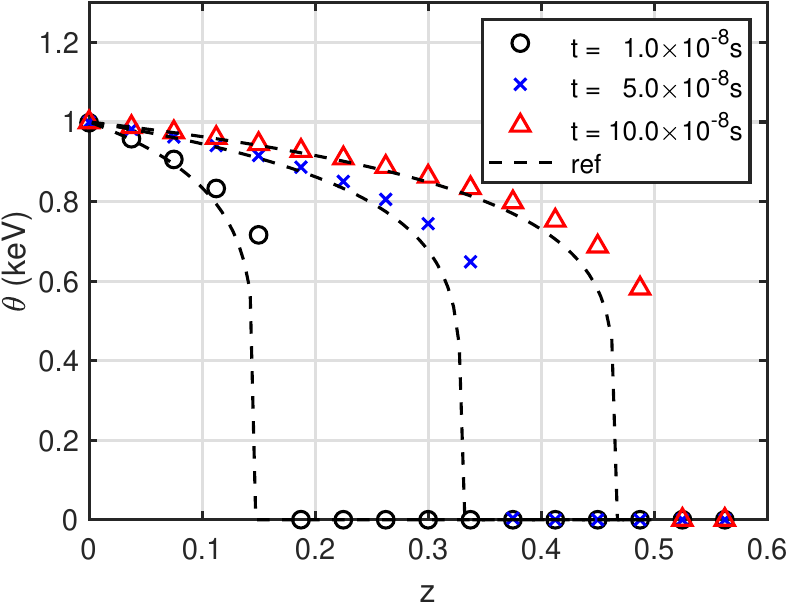}
 \caption{(\S\ref{sec:diffusive_Marshak_problem}: Diffusive Marshak-wave problem) The H$^2_2$ solutions for the material temperature, $\theta(t,z)$, for the diffusive Marshak-wave problem at various times: $t=10^{-8}$s, $5\times10^{-8}$s, and $10^{-7}$s, respectively, with $N_z=16$, $\sigma=300/{\theta}^3$, ${\theta}_0=10^{-4}$ keV, and $\text{CFL}=1.7$. The reference solution is the semi-analytic 
 equilibrium-diffusion solution.}
 \label{fig:HTN_diffusive_marshak}
 \end{figure}

\subsection{\bf Marshak-wave in thin medium}\label{sec:thin_Marshak_problem}
Here we apply our scheme to a Marshak-wave problem in an optically thin medium with an opacity given by 
\begin{equation}
\label{eqn:thin_opacity}
\sigma = 3/\theta^3.
\end{equation}
The radiation temperature is given by 
\begin{equation}
\label{eqn:marshak_theta_rad}
{\theta}_{\text{rad}}(t,z):=\sqrt[4]{\frac{E(t,z)}{a}},
\end{equation}
and the computational domain is $z\in[0,0.35]$.
We use the isotropic incoming boundary condition corresponding to a 1 keV temperature source on the left boundary $z_L=0$, and no incoming radiation on the right boundary at $z_R=0.35$:
\begin{equation}
    I\left(t,z_L,\mu>0\right) = \frac{1}{2}ac \qquad \text{and} \qquad
    I\left(t,z_R,\mu<0\right) = 0.
\end{equation}
The initial conditions are given by the following with ${\theta}_0=10^{-5}$ keV:
\begin{equation}
I\left(t=0,z,\mu\right) = \frac{1}{2}ac {{\theta}_0}^4 \qquad \text{and} \qquad
{\theta}\left(t=0,z\right)={\theta}_0.
\end{equation}

In \figref{fig:HTN_thin_marshak_a} we show the H$^{2}_{2}$ solution with $N_z=400$ for the material temperature,
$\theta$, and the radiation temperature, $\theta_{\text{rad}}$ \eqref{eqn:marshak_theta_rad}. 
In \figref{fig:HTN_thin_marshak_b} we show the material temperature error for H$^{2}_{2}$ for various $N_z$. The error is computed by comparing the H$^2_2$ solutions with various $N_z$ to
 the P$_5$ solution with $N_z=2048$. This figure shows the expected degradation of the order of accuracy to first order in space for this problem due to the discontinuity of the solution.

Furthermore, in  \figref{fig:HTN_thin_marshak_convergence_rate} we investigate the various
convergence rates of H$^{1}_{N}, $H$^{2}_{N}$, H$^{4}_{N}$ and H$^{8}_{N}$ with $N_z=512$ as a function of $N$. In particular, we compute the following approximate $L_2$ error:
\begin{equation}
L_2\text{  error} \, := \, \left[\frac{0.35}{N_z}\sum_{i=1}^{N_z}\Bigl( \theta^{T}_{2j-1}\left(t_{\text{end}}, z_i\right)
- \theta^{T}_{99}\left(t_{\text{end}}, z_i\right) \Bigr)^{2}\right]^{1/2},
\end{equation}
for $j=1,2,\ldots,9$, where $\theta^{T}_N$ represents the material temperature as calculated
with the H$^T_N$ model. 
Due to the non-smoothness of the solution we note a fairly rapid convergence as a 
function of $T$ with fixed $N$, and slower convergence as a function of $N$ with fixed $T$. 


\begin{figure}[htbp]
 \centering
 \captionsetup[subfigure]{position=top,aboveskip=1pt}
  \subcaptionbox{The H$^2_2$ solutions with $N_z=400$\label{fig:HTN_thin_marshak_a}}{\includegraphics[width=57mm]{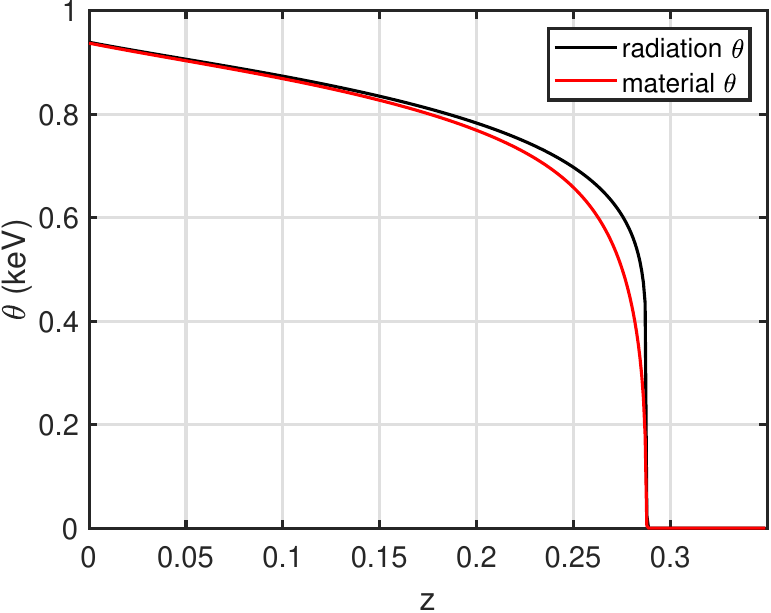}}\hfill
 \subcaptionbox{H$^{2}_{2}$ convergence rate in position space\label{fig:HTN_thin_marshak_b}}{\includegraphics[width=57mm]{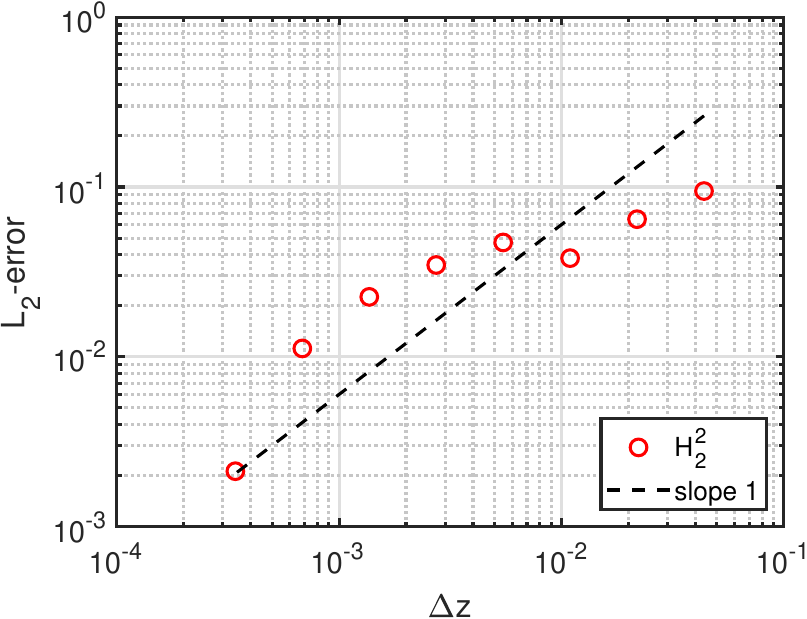}}
 \caption{(\S\ref{sec:thin_Marshak_problem}: Marshak-wave in thin medium) The H$^2_2$ solutions and convergence rate for the thin Marshak-wave problem at $t=10^{-9}$s with $\sigma=3/{\theta}^3$, ${\theta}_0=10^{-5}$ keV, and $\text{CFL}=0.3$.}
 \label{fig:HTN_thin_marshak}
 \end{figure}
 
 
\begin{figure}[htbp]
 \centering
\includegraphics[width=90mm]{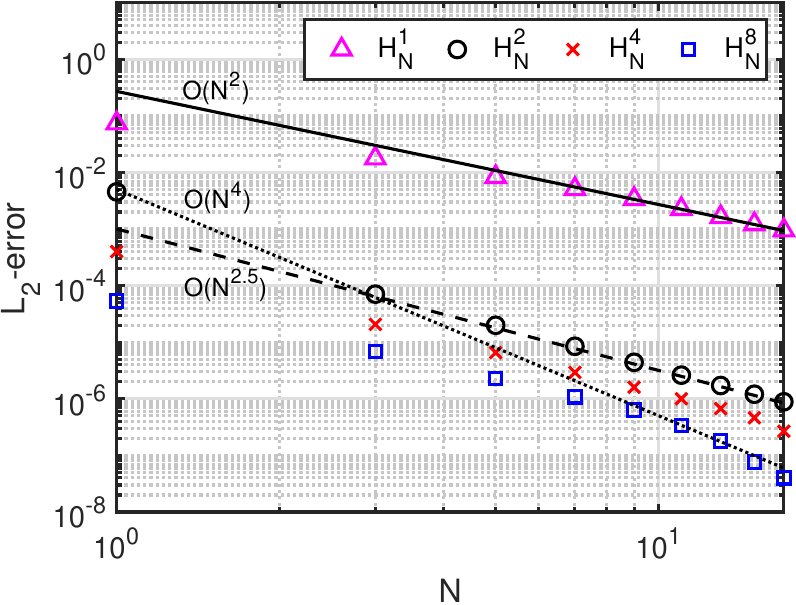}
 \caption{(\S\ref{sec:thin_Marshak_problem}: Marshak-wave in thin medium) Error in various H$^T_N$ approximations with $N_z=512$ and $\text{CFL}=0.3$.}
 \label{fig:HTN_thin_marshak_convergence_rate}
 \end{figure}

\subsection{\bf Smooth Marshak-wave}\label{sec:smooth_Marshak_problem}
Finally, a smooth Marshak-wave problem is considered in this section to observe the convergence rate of our numerical scheme on a smooth solution in an optically thin medium. The opacity
and radiation temperatures are again given by \eqref{eqn:thin_opacity} and
\eqref{eqn:marshak_theta_rad}, respectively.
Following \cite{lowrie,mcclarren1}, the computational domain is $z \in [0,0.8]$ and the smooth initial conditions are given by:
\begin{align}
    I(t=0,z,\mu) &= \frac{ac}{2} \biggl[ 1-0.498{\Bigl(1+\tanh{\bigl[50(z-0.25)\bigr]}\Bigr)} \biggr], \\
    {\theta}(t=0,z)&=\left(\frac{E(t=0,z)}{a}\right)^{\frac{1}{4}}.
\end{align}
The boundary conditions are given by 
\begin{equation}
    I\left(t,z_L,\mu>0\right) = I(t=0,z_L,\mu) \qquad \text{and} \qquad
    I\left(t,z_R,\mu<0\right) = 0,
\end{equation}
where $z_L=0$ and $z_R=0.8$.

In \figref{fig:IC_smooth_Marshak} we show the solution for the material
temperature, $\theta(t,z)$, at various times: (a)  

The initial condition for the material temperature, $\theta(t,z)$, is shown in Panel (a) of \figref{fig:IC_smooth_Marshak}. In  \figref{fig:IC_smooth_Marshak}(b) we show a direct
comparison of the material temperature as computed with $P_5$ and $H^2_2$, both methods have $\text{DOF}=6$, for $N_z = 128$. In  \figref{fig:IC_smooth_Marshak}(c)  and  \figref{fig:IC_smooth_Marshak}(d) we show the material temperature as computed by $P_5$ and $H^2_2$ with various $N_z$, respectively.

\begin{figure}[htbp]
    \centering
    \captionsetup[subfigure]{position=top,aboveskip=1pt}
\subcaptionbox{The initial condition}{\includegraphics[width=57mm]{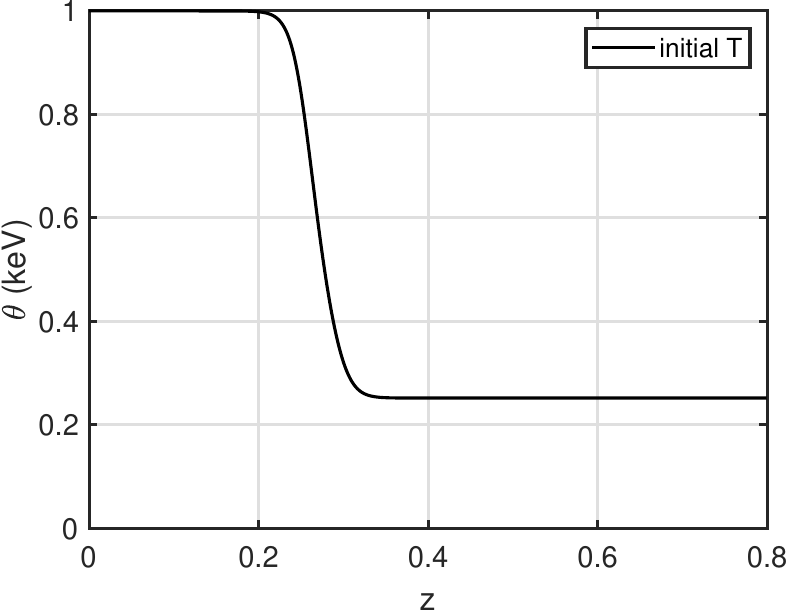}}\hfill
\subcaptionbox{P$_5$ vs H$^{2}_{2}$ with $N_z=128$}{\includegraphics[width=57mm]{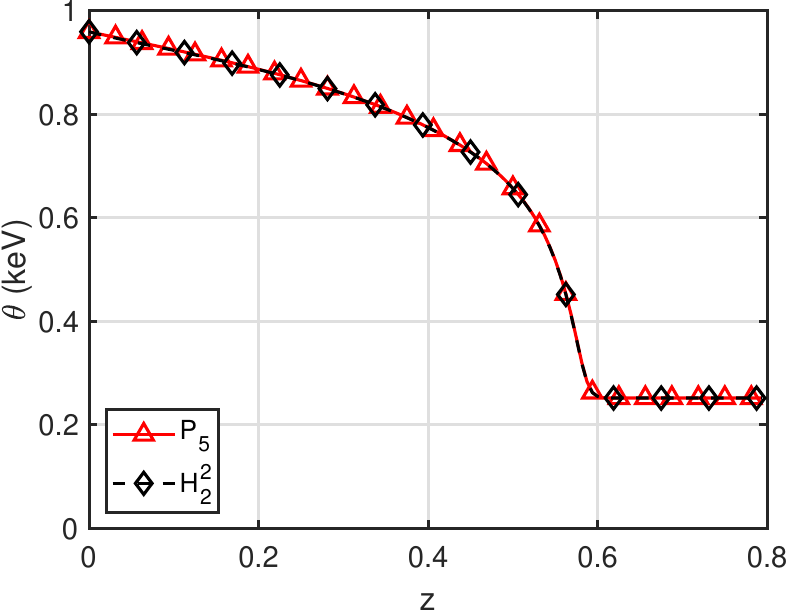}}\vspace{5mm}
 \subcaptionbox{P$_{5}$ solutions with various $N_{z}$}{\includegraphics[width=57mm]{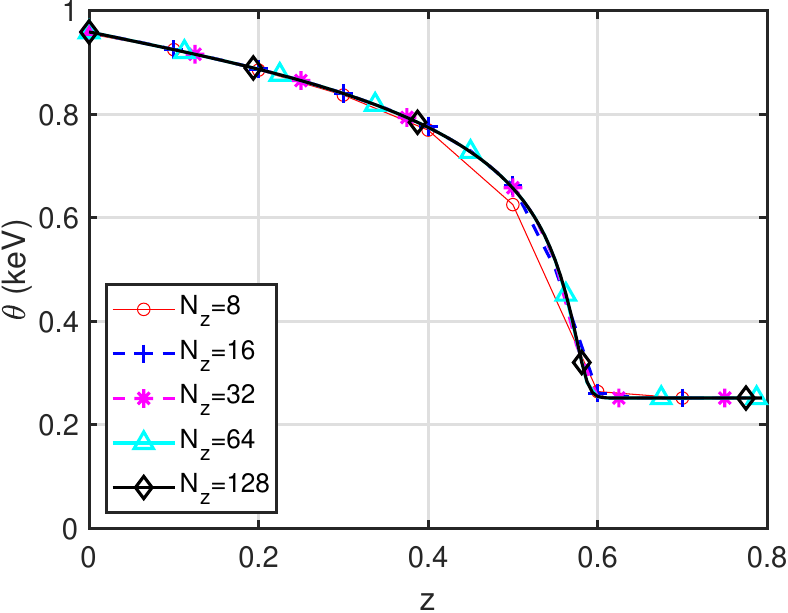}}\hfill
  \subcaptionbox{H$^{2}_{2}$ solutions with various $N_{z}$}{\includegraphics[width=57mm]{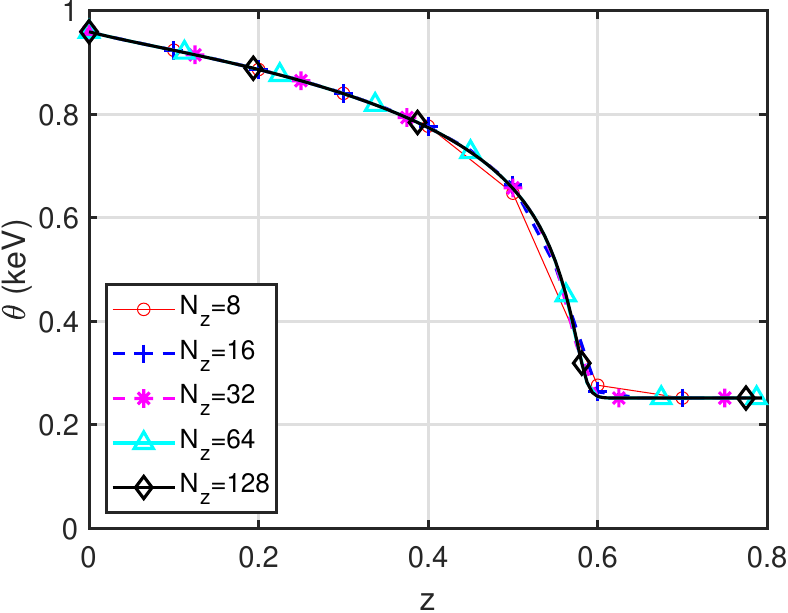}}
    \caption{(\S\ref{sec:smooth_Marshak_problem}: Smooth Marshak-wave) Comparison of the P$_5$ and H$^2_N$ solutions for the material temperature for the smooth Marshak wave problem with $\text{CFL}=0.3$.}
    \label{fig:IC_smooth_Marshak}
\end{figure}

Finally, in \figref{fig:convergence_rate_smooth} we demonstrate second-order convergence 
of the $H^2_2$ with increasing mesh resolution $N_z$. In this figure, 
 the dashed-line indicates a slope of two on a log-log scale. In these convergence experiments the approximate $L_2$ error is calculated via the formula:
\begin{equation}
\label{eqn:l2smooth_err1}
L_2({\theta}-{{\theta}}^{2048}):=\sqrt{\frac{0.8}{N_z}\sum_{k=1}^{N_z}{\left({\theta}_k-{{\theta}}^{2048/(2^m)}_k\right)^2}},
\end{equation}
where
\begin{equation}
\label{eqn:l2smooth_err2}
{{\theta}}^{2048/(2^{m+1})}_k=\frac{1}{2}\left({{\theta}}_{2(k-1)+1}^{2048/(2^{m})}+{{\theta}}_{2(k-1)+2}^{2048/(2^{m})}\right),
\end{equation}
for $m=0,1,\cdots,7$. Here ${{\theta}}^{2048}$ represents the reference temperature solution on mesh
with $N_z=2048$ cells, and the superscript $k$ stands for the $k^{\text{th}}$ grid cell. 
The idea encapsulated in formulas \eqref{eqn:l2smooth_err1} and \eqref{eqn:l2smooth_err2} is that we project the reference solution ${\theta}^{2048}$ onto coarser mesh, i.e., $N_z=2048/(2^m)$, for $m=1,2,\cdots,8$ by taking the average of left and right cell on finer mesh, to obtain ${\theta}^{1024},{\theta}^{512},\cdots,{\theta}^{8}$; once we have projected this solution down to the mesh on which $\theta$ is defined, we can directly compute the $L_2$ distance.
In \figref{fig:convergence_rate_smooth} we have used this strategy with $N_z=8$, 16, 32, 64, 128, 256, 512, and 1024 to show the convergence rate. 

\begin{figure}[htbp]
    \centering
\includegraphics[width=60mm]{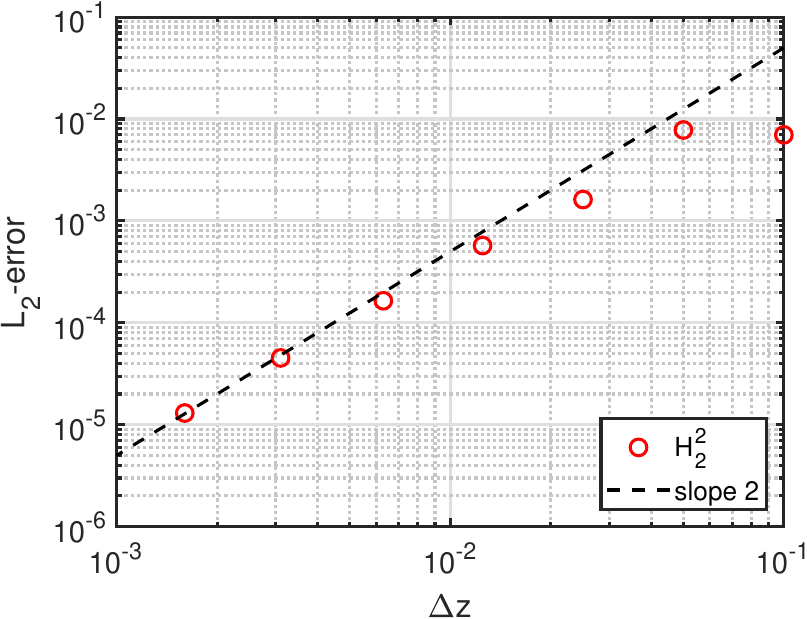}
    \caption{(\S\ref{sec:smooth_Marshak_problem}: Smooth Marshak-wave) Convergence rate of the material temperature ${\theta}$ for the smooth Marshak wave problem.}
    \label{fig:convergence_rate_smooth}
\end{figure}


\section{Conclusion}\label{sec:conclusion}
In this work we have developed the hybrid discrete (H$^T_N$) approximation method for the thermal radiative transfer (TRT) equations, and implemented a numerical discretization of these equations
using a second order discontinuous Galerkin finite element method in conjunction
with a semi-implicit time-stepping scheme. The H$^T_N$ approach acquires desirable properties of two classical methods for the TRT equations, namely P$_N$ (spherical harmonics) and S$_N$ (discrete ordinates), and indeed reduces to each of these approximations in various limits: H$^1_N$ $\equiv$ P$_N$ and H$^T_0$ $\equiv$ S$_T$. We proved that the H$^T_N$ approximation results in a system of hyperbolic partial differential equations for all $T\ge 1$ and $N\ge 0$. In particular, in one spatial dimension, the H$^T_N$ scheme is essentially a collection of P$_N$ approximations localized in each velocity band. Because of this structure, H$^T_N$, just like P$_N$, can exhibit negative
densities. However, because H$^T_N$ has band-localized structure, we  are able to control unphysical numerical oscillations by increasing the number of discrete regions in velocity space, thereby
blending properties of P$_N$ and S$_N$. 

Once the H$^T_N$ approximation was developed for TRT, we introduced a semi-implicit numerical method that is based on a second order explicit Runge-Kutta scheme for the the streaming term and an implicit Euler scheme for the material coupling term. Furthermore, in order to solve the material energy equation implicitly after each predictor and corrector step, we linearized the temperature term using a Taylor expansion; this avoided the need for an iterative procedure, and therefore improved efficiency. In order to reduce unphysical oscillation,  we applied a slope limiter after each time step.

In the numerical results section we compared the solutions of the H$^T_N$ and P$_N$ schemes for the various benchmark problems. We demonstrated for a variety of problems that for a fixed total number of moments, we are able to achieve accuracy gains over the $\text{P}_N \equiv \text{H}^1_N$ approximation by balancing $T$ and $N$. With a more balanced choice of $T$ and $N$, H$^T_N$ shows less oscillation than P$_N$, especially in the presence of discontinuities. {\color{black}One may use the large $T$ when the problems include strong shocks or the solutions are non-smooth. Otherwise, one can use high $N$ instead. The control of the variables $T$ and $N$ can compensate for the shortcomings of P$_N$ and S$_N$ schemes.}
 
In future work, we will develop extensions of the H$_N^T$ method for multi-energy group models or frequency-dependent equations in multiple dimensions. In particular, just as we have done in this work, we will investigate various choices of $N$ and $T$ to achieve accurate and efficient moment closure in the multidimensional setting. We will also investigate adaptive strategies for selecting $T$ and $N$ in the presence of some appropriate error indicator.

\begin{acknowledgements}
JAR was supported in part by NSF Grants DMS--1620128 and DMS--2012699.
\end{acknowledgements}

%
%

\bibliographystyle{spmpsci}      

\end{document}